\newtheorem{cor}{Corollary}
\newtheorem{prp}{Proposition}
\newtheorem{thm}{Theorem}
\theoremstyle{definition}
\numberwithin{equation}{section}
\newcounter{ass}
\newenvironment{assumption}{\begin{enumerate}[label=(\textbf{\Alph*})]\setcounter{enumi}{\theass}}{\setcounter{ass}{\value{enumi}}
\end{enumerate}}
\author{Tristan Bice}
\address{Federal University of Bahia\\
Salvador\\
Brazil}
\email{Tristan.Bice@gmail.com}
\thanks{This research has been supported by an IMPA (Brazil) postdoctoral fellowship.}
\keywords{ring, involution, accretive, order, orthogonality, Rickart}
\subjclass[2010]{06F25, 16W10, 46L05, 47B44}
\title{*-Ring Orderings}
\begin{document}

\begin{abstract}
We examine a number of *-ring orderings, generalizing classical properties of *-positive elements to *-accretives.  We also examine *-rings satisfying versions of Blackadar's property (SP), generalizing some basic properties of Rickart *-rings to Blackadar *-rings.
\end{abstract}

\maketitle

\subsection*{Motivation}

Orderings on the positive elements have long been fundamental to operator algebra theory.  More recently, the larger class of accretive elements has become important for generalizing C*-algebra theory to Banach algebras (see \cite{BlecherRead2014}, \cite{BlecherOzawa2014}, \cite{Blecher2015} and the references therein).  Here we make some observations relating to order and orthogonality on the *-accretives in a purely algebraic context.  This demonstrates that certain basic properties only require a weak fragment of the C*-algebra structure.  It might also serve as a guide for properties to look for in more general Banach algebras.

A more traditional approach to *-rings would be to focus on just the projections and assume they correspond to annihilators (see \cite{Berberian1972}).  However, this does not apply to many (e.g. infinite dimensional separable) C*-algebras.  This motivates us to examine weaker conditions which correspond to Blackadar's property (SP) in the C*-algebra case.  This allows us to generalize some of the classical Rickart *-ring theory, as we demonstrate in \autoref{B*R} and \autoref{LS}.

\subsection*{Outline}  In \autoref{AR} we make some general definitions for binary relations.

In \autoref{S} we discuss a number of semigroup orderings.

In \autoref{*R} we review proper *-rings and define an equivalence relation from the skew-adjoints.  A weaker preorder is then defined from the *-accretives in \autoref{P}.  The only assumptions we require are \ref{unital*ring} and \ref{antisymmetry} which say that $A$ is a proper unital *-ring for which this preorder is antisymmetric on the self-adjoints.

In \autoref{B&C} we introduce some other important subsets of $A$ and discuss their interrelationships, closure properties and the order relations they define.

In \autoref{O} we generalize orthogonality properties of *-positive elements to *-accretives, e.g. showing orthogonality is symmetric on $\mathfrak{c}$ and $\mathfrak{c}\mathfrak{c}$ contains no non-zero nilpotents.

In \autoref{F} we show that the fixator relation $\ll$ is auxiliary to various other order relations and discuss lattice properties and Riesz interpolation for $\ll$.

In \autoref{PJ} we characterize projections and their products, sums and differences.

In \autoref{C} we use the extra assumption \ref{abA+} to generalize some C*-algebra results on *-positive decompositions, square-roots and products.

In \autoref{B*R} we examine the relationships between various kinds of Blackadar *-rings.

In \autoref{LS} we characterize projection supremums/infimums in $\subseteq_\perp$\!-Blackadar *-rings.

\newpage

\section{Orderings}\label{AR}

We define the usual \emph{composition} of relations $\mathbin{\ll},\mathbin{\preceq}\subseteq A\times A$ by
\begin{align*}
\ll\circ\preceq\ &=\ \bigcup_{c\in A}\{(a,b)\in A\times A:(a,c)\in\mathbin{\ll}\text{ and }(c,b)\in\mathbin{\preceq}\},\\
\text{i.e.}\quad a\ll\circ\preceq b\ &\Leftrightarrow\ \exists c\in A(a\ll c\preceq b)\quad\text{in standard infix notation}.
\end{align*}
Generalizing the definition of auxiliarity in \cite{GierzHofmannKeimelLawsonMisloveScott2003} Definition I-1.11, we say
\begin{align*}
\ll\text{ is \emph{left auxiliary} to }\preceq\quad&\Leftrightarrow\quad\mathbin{\ll}\circ\mathbin{\preceq}\ \ =\ \mathbin{\ll}\ \subseteq\ \mathbin{\preceq}.\\
\ll\text{ is \emph{right auxiliary} to }\preceq\quad&\Leftrightarrow\quad\mathbin{\preceq}\circ\mathbin{\ll}\ \ =\ \mathbin{\ll}\ \subseteq\ \mathbin{\preceq}\\
\ll\text{ is \emph{auxiliary} to }\preceq\quad&\Leftrightarrow\quad\mathbin{\ll}\text{ is left and right auxiliary to }\preceq.
\end{align*}
Define $B\ll_\mathsf{F}C\ \Leftrightarrow\ b\ll c$, for all $b\in B$ and $c\in C$, where $B,C\neq\emptyset$ are finite.
\begin{align*}
\ll\text{ is \emph{transitive}}\quad&\Leftrightarrow\quad\mathbin{\ll}\circ\mathbin{\ll}\ \subseteq\ \mathbin{\ll}.\\
\ll\text{ has \emph{interpolation}}\quad&\Leftrightarrow\quad\mathbin{\ll}\circ\mathbin{\ll}\ \supseteq\ \mathbin{\ll}.\\
\ll\text{ has \emph{Riesz interpolation}}\quad&\Leftrightarrow\quad\mathbin{\ll}_\mathsf{F}\circ\mathbin{\ll_\mathsf{F}}\ \supseteq\ \mathbin{\ll_\mathsf{F}}.
\end{align*}
So if $\ll$ is left (or right) auxiliary to $\preceq$ then $\ll$ is automatically transitive, for then $\mathbin{\ll}\circ\mathbin{\ll}\ \subseteq\ \mathbin{\ll}\circ\mathbin{\preceq}\ \subseteq\ \ll$.  Also $\ll$ is self-auxiliary iff $\ll$ is transitive and has interpolation.  Transitivity also means it suffices for Riesz interpolation to hold on pairs of elements in $A$.  Further define the following standard terminology.
\begin{align*}
\ll\text{ is \emph{reflexive}}\quad&\Leftrightarrow\quad\mathbin{=}\ \subseteq\ \mathbin{\ll}.\\
\ll\text{ is \emph{antisymmetric}}\quad&\Leftrightarrow\quad\mathbin{=}\ \supseteq\ \mathbin{\ll}\cap\mathbin{\gg}.\\
\ll\text{ is \emph{symmetric}}\quad&\Leftrightarrow\quad\mathbin{\ll}\ =\ \mathbin{\gg}.\\
\ll\text{ is a \emph{preorder}}\quad&\Leftrightarrow\quad\mathbin{\ll}\text{ is transitive and reflexive}.\\
\ll\text{ is a \emph{partial order}}\quad&\Leftrightarrow\quad\mathbin{\ll}\text{ is an antisymmetric preorder}.\\
\ll\text{ is an \emph{equivalence relation}}\quad&\Leftrightarrow\quad\mathbin{\ll}\text{ is a symmetric preorder}.
\end{align*}

Primarily for use in \autoref{B*R} and \autoref{LS}, let $a\ll$ and $\ll a$ denote the subsets defined by
\begin{align*}
a\ll\ &=\ \{b\in A:a\ll b\}.\\
\ll a\ &=\ \{b\in A:b\ll a\}.
\end{align*}
For any relation $\mathbin{\preceq}\subseteq\mathscr{P}(A)\times\mathscr{P}(A)$, where $\mathscr{P}(A)=\{B\subseteq A\}$, define
\begin{align}
\label{R^}a\preceq^\ll b\quad&\Leftrightarrow\quad(\ll a)\preceq(\ll b).\\
\label{R_}a\preceq_\ll b\quad&\Leftrightarrow\quad(b\ll)\preceq(a\ll).
\end{align}
Note transitivity, reflexivity and symmetry hold for $\preceq^\ll$ if they hold for $\preceq$.  In particular, $\subseteq^\ll$ and $\subseteq_\ll$ are preorders while $=^\ll$ and $=_\ll$ are equivalence relations.  Also, for any $\mathbin{\preceq}\subseteq A\times A$,
\begin{align}
\mathbin{\preceq}\ \subseteq\ \mathbin{\subseteq^\ll}\quad&\Leftrightarrow\quad\mathbin{\ll}\circ\mathbin{\preceq}\ \subseteq\ \mathbin{\ll}.\\
\label{preceqsubsub}\mathbin{\preceq}\ \subseteq\ \mathbin{\subseteq_\ll}\quad&\Leftrightarrow\quad\mathbin{\preceq}\circ\mathbin{\ll}\ \subseteq\ \mathbin{\ll}.
\intertext{Thus transitivity and reflexivity of $\ll$ is characterized in terms of $\subseteq^\ll$ (or $\subseteq_\ll$) by}
\nonumber\mathbin{\ll}\ \subseteq\ \mathbin{\subseteq^\ll}\quad&\Leftrightarrow\quad\mathbin{\ll}\circ\mathbin{\ll}\ \subseteq\ \mathbin{\ll}.\\
\nonumber\mathbin{\subseteq^\ll}\ \subseteq\ \mathbin{\ll}\ \ \quad&\Leftrightarrow\hspace{32pt}\mathbin{=}\ \subseteq\ \mathbin{\ll}.
\end{align}
So $\subseteq^\ll$ and $\subseteq_\ll$ are the weakest relations having $\ll$ as a left and right auxiliary respectively, as long as $\ll$ is transitive.  And they both coincide with $\ll$ if $\ll$ is a preorder.  These constructions apply to non-transitive relations too, for example to the $\ll$-incompatibility relation $\top$ defined by
\[a\mathbin{\top}b\quad\Leftrightarrow\quad\forall c\in A(c\ll a,b\ \Rightarrow\ (c\ll)=A).\]
Separativity, as in \cite{Kunen1980} Chapter 2 Exercise (15), is then naturally defined by
\[\ll\text{ is \emph{separative}}\quad\Leftrightarrow\quad\mathbin{\ll}\ =\ \mathbin{\subseteq_\top}.\]

We also define \emph{supremums} $\bigvee$ and \emph{infimums} $\bigwedge$ of $B\subseteq A$ w.r.t. $\ll$ by
\begin{align}
\label{sup}a=\bigvee B\quad&\Leftrightarrow\quad(a\ll)=\bigcap_{b\in B}(b\ll).\\
\label{inf}a=\bigwedge B\quad&\Leftrightarrow\quad(\ll a)=\bigcap_{b\in B}(\ll b).
\end{align}
So when $\ll$ is reflexive, supremums/infimums are upper/lower bounds.  If $\ll$ is also antisymmetric, then supremums/infimums are unique, when they exist.  Also $\ll$-supremums/$\ll$-infimums are precisely the $\subseteq_\ll$-supremums/$\subseteq^\ll$-infimums so we could restrict to preorders here, as is often done in the literature.  Also define
\begin{align*}
A\text{ is a \emph{$\ll$-semilattice}}\quad&\Leftrightarrow\quad\bigvee F\text{ exists for all non-empty finite }F\subseteq A.\\
A\text{ is a \emph{$\ll$-lattice}}\quad&\Leftrightarrow\quad A\text{ is a $\ll$-semilattice and $\gg$-semilattice}.
\end{align*}
For $A$ to be a $\ll$-lattice it suffices that $a\vee b=\bigvee\{a,b\}$ and $a\wedge b=\bigwedge\{a,b\}$ exist/are defined, for all $a,b\in A$.  This implies Riesz interpolation is equivalent to interpolation.  For $B\subseteq A$, we also define
\begin{align*}
B\text{ is $\ll$-\emph{cofinal} in }A\quad&\Leftrightarrow\quad B\cap(a\ll)\neq\emptyset\text{ whenever }(a\ll)\neq\emptyset.\\
B\text{ is $\ll$-\emph{coinitial} in }A\quad&\Leftrightarrow\quad B\cap(\ll a)\neq\emptyset\text{ whenever }(\ll a)\neq\emptyset.
\end{align*}

\section{Semigroups}\label{S}

In a semigroup $A$ we define the \emph{Green}, \emph{fixator} and \emph{orthogonality} relations by
\begin{align*}
a\preceq b\quad&\Leftrightarrow\quad a\in Ab.\\
a\ll b\quad&\Leftrightarrow\quad a=ab.\\
a\perp b\quad&\Leftrightarrow\quad0=ab.
\intertext{So $\perp$ requires a zero $0\in A$ i.e. satisfying $0A=\{0\}=A0$.  Also $(\preceq a)=Aa$ so}
a\subseteq^\preceq b\quad&\Leftrightarrow\quad Aa\subseteq Ab,
\end{align*}
which provides an alternative description of $\preceq$ when $\preceq$ is reflexive, e.g. when $A$ has a unit $1\in A$, i.e. satisfying $1a=a=a1$, for all $a\in A$.  In this case, the symmetrization $\mathcal{L}=\mathbin{\preceq}\cap\mathbin{\succeq}$ is well-known in semigroup theory as one of Green's relations (see \cite{Lawson2004} Chapter 10), while $\mathbin{\precapprox}=\mathbin{\preceq\circ\preceq^\mathrm{op}}$, where $a\preceq^\mathrm{op}b\Leftrightarrow a\in bA$, has been studied for C*-algebra $A$ in \cite{Cuntz1977}.  Variants of $\ll$ and $\perp$ are also often considered in C*-algebras \textendash\, see \cite{Blackadar2013} II.3.1.13 and II.3.4.3.  They also crop up naturally in lattice theory.  Indeed, if $\leq$ is a partial order making $A$ a $\geq$-semilattice and we take $\wedge$ as our semigroup operation then $\leq\ =\ \preceq\ =\ \ll$ and $\top=\ \perp$.

In general, we have the following relationships between $\preceq$, $\ll$ and $\perp$.
\begin{align}
\label{preceqpreceq}\preceq\circ\preceq\quad&\subseteq\quad\preceq.\\
\label{preceqll}\preceq\circ\ll\hspace{8pt}&\subseteq\quad\ll\quad\subseteq\quad\preceq.\\
\label{preceqperp}\preceq\circ\perp\quad&\subseteq\quad\perp.
\end{align}

\begin{proof}\
\begin{itemize}
\item[\eqref{preceqpreceq}]  If $a\preceq c\preceq b$ then $a=dc$ and $c=eb$ so $a=dc=deb$, i.e. $a\preceq b$.
\item[\eqref{preceqll}]  If $a\preceq c\ll b$ then $a=dc$ and $c=cb$ so $a=dc=dcb=ab$, i.e. $a\ll b$.
\item[\eqref{preceqperp}]  If $a\preceq c\perp b$ then $a=dc$ and $0=cb$ so $0=d0=dcb=ab$, i.e. $a\perp b$.\qedhere
\end{itemize}
\end{proof}

By \eqref{preceqll}, $\ll$ is transitive.  Applied to binary relations on $A$ under composition, this shows that auxiliarity is itself a transitive relation.  If $\preceq$ is reflexive (e.g. if $A$ is unital), \eqref{preceqll} also shows that $\preceq$ is right auxiliary to $\ll$.

We will also consider $\preceq$ relative to various subsets $B$ of $A$ defined by
\[a\preceq b\quad\Leftrightarrow\quad a\in Bb.\]
If $A$ is unital then we can characterize properties of $\preceq$ by those of $B$ as follows.
\begin{align*}
BB\subseteq B&\quad\Leftrightarrow\quad\mathbin{\preceq\circ\preceq}\ \subseteq\ \mathbin{\preceq}.\tag{Transitivity}\label{trans}\\
1\in B&\quad\Leftrightarrow\hspace{28pt}\mathbin{=}\ \subseteq\ \mathbin{\preceq}.\tag{Reflexivity}\label{reflex}\\
B^{-1}=B&\quad\Leftrightarrow\hspace{28pt}\mathbin{\succeq}\ =\ \mathbin{\preceq}.\tag{Symmetry}\label{sym}
\end{align*}
Here $B^{-1}$ denotes the inverses of invertible elements of $B$.
\begin{proof}\
\begin{itemize}
\item[\eqref{trans}]  If $\mathbin{\preceq\circ\preceq}\subseteq\mathbin{\preceq}$ and $a,b\in B$ then $ab\preceq b\preceq1$ so $ab\preceq1$, i.e. $ab\in B$.

\item[\eqref{reflex}]  If $1\in B$ then $a\in Ba$ so $a\preceq a$, for all $a\in A$.  If $1\preceq1$ then $1\in B1=B$.

\item[\eqref{sym}]  If $B^{-1}=B$ and $a\preceq b$ then $a\in Bb$ so $b\in B^{-1}a=Ba$, i.e. $b\preceq a$.

If $\mathbin{\succeq}=\mathbin{\preceq}$ and $a\in B$ then $a\preceq1$ so $1\preceq a$ and hence $1\in Ba$, i.e. $a$ has a left inverse $a^{-1}\in B$.  Likewise, $a^{-1}$ has a left inverse $(a^{-1})^{-1}\in B$.  But then $(a^{-1})^{-1}=(a^{-1})^{-1}1=(a^{-1})^{-1}a^{-1}a=1a=a$ so $a$ is a left inverse of $a^{-1}$, i.e. $a^{-1}$ is also a right inverse of $a$.\qedhere
\end{itemize}
\end{proof}

Often $\preceq$ is considered when $B$ is a subset of a group $A$.  In this case, using additive notation, any subsemigroup $B=B+B$ containing $0$ defines a preorder by
\[a\preceq b\quad\Leftrightarrow\quad b-a\in B,\]
which is an equivalence relation iff $B=-B$ is a subgroup, and a partial order iff
\[B\cap-B\subseteq\{0\}\quad\Leftrightarrow\quad\mathbin{\preceq}\cap\mathbin{\succeq}\ \subseteq\ \mathbin{=}.\tag{Antisymmetry}\]

\section{*-Rings}\label{*R}

Following \cite{Berberian1972}, we make the following standing assumption until \autoref{B*R}.
\begin{assumption}
\item\label{unital*ring} $A$ is a proper unital\footnote{Unitality is required to define the unit ball $\mathfrak{B}$ (see \autoref{B&C} below).  However, any non-unital proper *-ring has a proper unitization (see \cite{Berberian1972} \S5 Definition 3),  which could be used to generalize the theory.  The only caveat is that different unitizations might yield different generalizations.} *-ring.
\end{assumption}
So the \emph{adjoint} * is a proper self-inverse morphism from $A$ to $A^\mathrm{op}$, i.e.
\begin{align}
a^{**} &=a.\nonumber\\
(ab)^* &=b^*a^*.\nonumber\\
(a+b)^* &=a^*+b^*.\nonumber\\
\label{properness}a^*a=0\ &\Rightarrow\ a=0.
\end{align}

The \emph{self-adjoint}, \emph{skew-adjoint} and \emph{normal} elements are defined by
\begin{align*}
A_\mathrm{sa} &=\{a\in A:a=a^*\},\\
A_\mathrm{sk} &=\{a\in A:a=-a^*\}\text{ and}\\
A_\mathrm{n} &=\{a\in A:aa^*=a^*a\}\\
&\supseteq A_\mathrm{sa}\cup A_\mathrm{sk}.
\end{align*}
As $a^*+(-a)^*=(a-a)^*=0^*=(0^*0)^*=0^*0^{**}=0^*0=0$, we have $(-a)^*=-a^*$.  Thus $A_\mathrm{sk}=-A_\mathrm{sk}=A_\mathrm{sk}+A_\mathrm{sk}$ and we get an equivalence relation defined by
\[a\equiv b\quad\Leftrightarrow\quad a-b\in A_\mathrm{sk}.\]
But $a-b\in A_\mathrm{sk}$ means $a-b=-(a-b)^*=-a^*+b^*$ so 
\[a\equiv b\quad\Leftrightarrow\quad a^*+a=b^*+b.\]
So $\equiv$ is the equivalence relation coming from the $+$-homomorphism $a\mapsto a^*+a$, which induces a $+$-morphism from $A/A_\mathrm{sk}$ to $A_\mathrm{sa}$.  In particular, for all $a\in A$,
\[a\equiv a^*.\]

\section{Positivity}\label{P}
Define the \emph{*-squares}, \emph{*-sums}, \emph{*-positive} and \emph{*-accretive} elements by
\begin{align*}
|A|^2 &=\{a^*a:a\in A\}.\\
A_\Sigma &=\{\sum_{k=1}^na_k:a_1,\cdots,a_n\in|A|^2\}.\\
A_+ &=\{a\in A:na\in A_\Sigma,\text{ for some }n\in\mathbb{N}\}.\\
\mathfrak{r} &=\{a\in A:a+a^*\in A_+\}.
\end{align*}
The only other standing assumption we need until \autoref{C} is that $A_\Sigma$ is salient, i.e.
\begin{assumption}
\item\label{antisymmetry} $A_\Sigma\cap-A_\Sigma=\{0\}$.
\end{assumption}

This means that $(A,+)$ is torsion-free, for if $na=0$ then $na^*a=0$ and hence $a^*a=-(n-1)a^*a$, which means $a^*a=0$, by \ref{antisymmetry}, and hence $a=0$, by \eqref{properness}.\footnote{Conversely, \ref{antisymmetry} follows from \ref{unital*ring}, $2$ is not a zero divisor in $A$, $-1$ has a square root in $A'$ and every $a\in A_+$ has a square root in $A_\mathrm{sa}\cap\{a\}''$, where $B'=\{a\in A:ab=ba\}$ \textendash\, see \cite{Berberian1972} \S51.}
This, in turn, means that $A_+$ is salient too, for if $a\in A_+\cap-A_+$ then we have $m,n\in\mathbb{N}$ with $ma,-na\in A_\Sigma$ and hence $mna\in A_\Sigma\cap-A_\Sigma=\{0\}$ so $a=0$.  Thus
\begin{align}
\label{A+cap-A+}-A_+\cap A_+\hspace{1pt}&=\{0\}.\\
\label{AsacapAsk}A_\mathrm{sk}\cap A_\mathrm{sa}&=\{0\}.
\end{align}
It fact, for \eqref{AsacapAsk} it suffices that $2$ is not a zero-divisor, i.e. $2(0\neq)\subseteq(0\neq)$.

For $B\subseteq A$ and $n\in\mathbb{N}$, define $\tfrac{1}{n}B=\{a\in A:na\in B\}$ so
\begin{align*}
B&=\tfrac{1}{n}nB.\\
\tfrac{1}{m}\tfrac{1}{n}B&=\tfrac{1}{mn}B.\\
B\subseteq\tfrac{1}{n}B\ &\Leftrightarrow\ nB\subseteq B.
\end{align*}
By definition, $A_+=\bigcup_n\tfrac{1}{n}A_\Sigma$ so $\frac{1}{m}A_+=\bigcup_n\tfrac{1}{mn}A_\Sigma\subseteq A_+$, for all $m\in\mathbb{N}$.  If $ma,nb\in A_\Sigma$ then $mna,mnb\in A_\Sigma$ so $mn(a+b)\in A_\Sigma$, as $A_\Sigma=A_\Sigma+A_\Sigma$.  Thus
\begin{align*}
\tfrac{1}{n}A_+=A_+&=A_++A_+\quad\text{so}\\
\tfrac{1}{n}\mathfrak{r}=\mathfrak{r}&=\mathfrak{r}+\mathfrak{r}.
\end{align*}

Also $A_\mathrm{sa}=\frac{1}{n}A_\mathrm{sa}$, for if $na\in A_\mathrm{sa}$ then $na=na^*$ so $n(a-a^*)=0$ and $a=a^*$.  Certainly $|A|^2\subseteq A_\mathrm{sa}$ so $A_\Sigma\subseteq A_\mathrm{sa}$ and hence $A_+\subseteq A_\mathrm{sa}$ too, as $A_+=\bigcup_n\tfrac{1}{n}A_\Sigma$.  If $a\in A_\mathrm{sa}\cap\mathfrak{r}$ then $2a=a+a^*\in A_+$ so $a\in\frac{1}{2}A_+=A_+$.  While if $a\in\mathfrak{r}\cap-\mathfrak{r}$ then $a+a^*\in A_+\cap-A_+=\{0\}$ and hence $a=-a^*$, i.e.
\begin{align}
\label{A+=rcapAsa}A_+&=\mathfrak{r}\cap A_\mathrm{sa}.\\
\label{Ask=rcap-r}A_\mathrm{sk}&=\mathfrak{r}\cap-\mathfrak{r}.
\end{align}
If $2\in A^{-1}$ then $a=\frac{1}{2}(a+a^*)+\frac{1}{2}(a-a^*)$ so
\begin{equation}\label{r=A++Ask}
2\in A^{-1}\quad\Rightarrow\quad\mathfrak{r}=A_++A_\mathrm{sk}.
\end{equation}
Also, for $a,b\in A$, $(ba)^*(ba)=a^*b^*ba$ so $a^*|A|^2a\subseteq|A|^2$, $a^*A_\Sigma a\subseteq A_\Sigma$ and $a^*A_+a\subseteq A_+$.  Thus if $b\in\mathfrak{r}$ then $a^*ba+(a^*ba)^*=a^*(b+b^*)a\in a^*A_+a\subseteq A_+$, so we have
\[a^*\mathfrak{r}a\subseteq\mathfrak{r}.\]

As $A_++A_+=A_+$ and $\mathfrak{r}=\mathfrak{r}+\mathfrak{r}$, we get a preorders $\preceq^{\scriptscriptstyle{+}}$ and $\preceq^\mathfrak{r}$ defined by
\begin{align*}
a\preceq^{\scriptscriptstyle{+}}\!b\quad&\Leftrightarrow\quad b-a\in A_+.\\
a\preceq^\mathfrak{r}b\quad&\Leftrightarrow\quad b-a\in\mathfrak{r}.
\end{align*}
\begin{center}
\textbf{From now on $\preceq$ is fixed as an abbreviation for $\preceq^\mathfrak{r}$.}
\end{center}
By \eqref{A+cap-A+}, $\preceq^{\scriptscriptstyle{+}}$ is a partial order which is traditionally only considered on $A_\mathrm{sa}$.  Thus $\preceq$ provides a consistent extension to $A$.
Indeed, \eqref{AsacapAsk} and \eqref{A+=rcapAsa} yield
\begin{align*}
\mathbin{\equiv}\ &=\ \mathbin{=}\quad\ \,\text{on }A_\mathrm{sa}.\\
\mathbin{\preceq}\ &=\ \mathbin{\preceq^{\scriptscriptstyle{+}}}\quad\text{on }A_\mathrm{sa}.
\end{align*}
Also, by \eqref{Ask=rcap-r}, \eqref{r=A++Ask} and $a\equiv\frac{1}{2}(a^*+a)\preceq\frac{1}{2}(b^*+b)\equiv b$, when $2\in A^{-1}$, we have
\begin{align*}
\mathbin{\equiv}\ &=\ \mathbin{\preceq}\cap\mathbin{\succeq}.\\
2\in A^{-1}\quad\Rightarrow\quad\mathbin{\preceq}\ &=\ (\mathbin{\preceq^{\scriptscriptstyle{+}}}\circ\mathbin{\equiv})\ =\ (\mathbin{\equiv}\circ\mathbin{\preceq^{\scriptscriptstyle{+}}})\ =\ (\mathbin{\equiv}\circ\mathbin{\preceq^{\scriptscriptstyle{+}}_\mathrm{sa}}\circ\equiv),
\end{align*}
where $\preceq^{\scriptscriptstyle{+}}_\mathrm{sa}$ denotes the restriction of $\preceq^{\scriptscriptstyle{+}}$ to $A_\mathrm{sa}$.  Also $a^*\mathfrak{r}a\subseteq\mathfrak{r}=\mathfrak{r}^*=\frac{1}{n}\mathfrak{r}$ means
\[nb\preceq nc\quad\Leftrightarrow\quad b^*\preceq c^*\quad\Leftrightarrow\quad b\preceq c\quad\Rightarrow\quad a^*ba\preceq a^*ca.\]
Lastly, denote the composition of $\preceq^\mathfrak{r}$ and $a\mapsto a^*a$ by $\preceq^*$ so
\[a\preceq^*b\quad\Leftrightarrow\quad a^*a\preceq b^*b.\]

\section{Balls and Cones}\label{B&C}

Define the balls $\mathfrak{B}$, $\tfrac{1}{2}\mathfrak{F}$ and $\mathfrak{F}$ and the cone $\mathfrak{c}$ by
\begin{align*}
\mathfrak{B} &=\{a\in A:a^*a\preceq1\}.\\
\tfrac{1}{2}\mathfrak{F} &=\{a\in A:a^*a\preceq a\}.\\
\mathfrak{F} &=\{a\in A:a^*a\preceq2a\}.\\
\mathfrak{c} &=\{a\in A:a^*a\preceq na,\text{ for some }n\in\mathbb{N}\}.
\end{align*}
Note $2a\in\mathfrak{F}\ \Leftrightarrow\ 4a^*a=(2a)^*(2a)\preceq2(2a)=4a\ \Leftrightarrow\ a^*a\preceq a\ \Leftrightarrow\ a\in\frac{1}{2}\mathfrak{F}$, so this is consistent with the fraction notation in \autoref{P}.  Further define operations
\begin{align*}
a^\perp&=1-a.\\
|a|^2&=a^*a.\\
a\bullet b&=a+b-ab.\\
a*b&=a+b-2ab.
\end{align*}
The associativity of $\bullet$ and $*$ follows from the associativity of multiplication and
\begin{align*}
(a\bullet b)^\perp&=1-a-b+ab=a^\perp b^\perp.\\
2(a*b)&=2a+2b-4ab=(2a)\bullet(2b).
\end{align*}
In fact, this shows that $a\mapsto a^\perp$ is a (*-)isomorphism from $(A,\cdot)$ onto $(A,\bullet)$ so $(A,\bullet)$ is also a proper *-semigroup.  We also have the following.
\begin{gather*}
\nonumber\mathfrak{B}^\perp=\mathfrak{F}\subseteq\mathfrak{c}\subseteq\mathfrak{r}.\\
\nonumber\{0,1\}\subseteq|\mathfrak{B}|^2\subseteq\tfrac{1}{2}\mathfrak{F}=(\tfrac{1}{2}\mathfrak{F})^\perp\subseteq\mathfrak{F}\cap\mathfrak{B}.\\
\mathfrak{B}=\mathfrak{B}^*,\quad\tfrac{1}{2}\mathfrak{F}=\tfrac{1}{2}\mathfrak{F}^*,\quad\mathfrak{F}=\mathfrak{F}^*,\quad\mathfrak{c}=\mathfrak{c}^*.\\
\mathfrak{B}\mathfrak{B}=\mathfrak{B},\quad\mathfrak{F}\bullet\mathfrak{F}=\mathfrak{F},\quad\tfrac{1}{2}\mathfrak{F}*\tfrac{1}{2}\mathfrak{F}=\tfrac{1}{2}\mathfrak{F}.\nonumber\\
\mathfrak{c}+\mathfrak{c}=\mathfrak{c},\quad\mathfrak{c}\cap-\mathfrak{c}=\{0\}.\nonumber
\end{gather*}

\begin{proof}\
\begin{itemize}
\item[$\mathfrak{B}^\perp=\mathfrak{F}$]  Note $a^{\perp*}a^\perp=1-a^*-a+a^*a\equiv 1-2a+a^*a$ so $a^*a\preceq2a\Leftrightarrow a^{\perp*}a^\perp\preceq1$.
\item[$\mathfrak{F}\subseteq\mathfrak{c}\subseteq\mathfrak{r}$]  Note $0\preceq a^*a\preceq na$ yields $0\preceq a$.
\item[$\{0,1\}\subseteq|\mathfrak{B}|^2$]  Note $0=0^*0$ and $1=1^{**}=(1^*1)^*=1^*1^{**}=1^*1$.
\item[$|\mathfrak{B}^*|^2\subseteq\tfrac{1}{2}\mathfrak{F}$]  Note $aa^*\preceq1$ yields $(a^*a)^*a^*a=a^*(aa^*)a\preceq a^*a$.
\item[$(\tfrac{1}{2}\mathfrak{F})^\perp=\tfrac{1}{2}\mathfrak{F}$]  Note $a\in\tfrac{1}{2}\mathfrak{F}$ means $a^{\perp*}a^\perp=1-a^*-a+a^*a\preceq a^{\perp*}\equiv a^\perp$.
\item[$\tfrac{1}{2}\mathfrak{F}\subseteq\mathfrak{F}$]  Note $0\preceq a^*a\preceq a$ means $a^*a\preceq2a^*a\preceq2a$.
\item[$\tfrac{1}{2}\mathfrak{F}\subseteq\mathfrak{B}$]  Note $\tfrac{1}{2}\mathfrak{F}=(\tfrac{1}{2}\mathfrak{F})^\perp\subseteq\mathfrak{F}^\perp=\mathfrak{B}$.
\item[$\mathfrak{B}=\mathfrak{B}^*$]  If $a^*a\preceq1$ then $aa^*aa^*\preceq aa^*$ so $0\preceq(aa^*)^{\perp2}=1-2aa^*+aa^*aa^*\preceq(aa^*)^\perp$.
\item[$\frac{1}{2}\mathfrak{F}=\frac{1}{2}\mathfrak{F}^*$ and $\mathfrak{F}=\mathfrak{F}^*$]  Note $\mathfrak{F}=\mathfrak{B}^\perp=\mathfrak{B}^{*\perp}=\mathfrak{F}^*$.
\item[$\mathfrak{c}=\mathfrak{c}^*$]  As above for $\mathfrak{B}=\mathfrak{B}^*$, we have $a^*a\preceq n\ \Leftrightarrow\ aa^*\preceq n$.  Now if $a\in\mathfrak{c}$ then $0\preceq a^*a\preceq na\preceq 2na$.  Then $(n-a)^*(n-a)\equiv n^2-2na+a^*a\preceq n^2$ so $(n-a)(n-a)^*\preceq n^2$ and hence $aa^*\preceq2na^*$.  Thus $a\in\mathfrak{c}^*$.
\item[$\mathfrak{B}\mathfrak{B}=\mathfrak{B}$]  If $a,b\in\mathfrak{B}$ then $a^*a\preceq1$ so $b^*a^*ab\preceq b^*b\preceq1$ hence $ab\in\mathfrak{B}$.
\item[$\mathfrak{F}\bullet\mathfrak{F}=\mathfrak{F}$]  If $a,b\in\mathfrak{F}$ then $a\bullet b=(a^\perp b^\perp)^\perp\in(\mathfrak{B}\mathfrak{B})^\perp=\mathfrak{F}$.
\item[$\tfrac{1}{2}\mathfrak{F}*\tfrac{1}{2}\mathfrak{F}=\tfrac{1}{2}\mathfrak{F}$]  If $a,b\in\frac{1}{2}\mathfrak{F}$ then $2(a*b)=(2a)\bullet(2b)\in\mathfrak{F}\bullet\mathfrak{F}=\mathfrak{F}$ so $a*b\in\frac{1}{2}\mathfrak{F}$.
\item[$\quad\mathfrak{c}\cap-\mathfrak{c}=\{0\}$] If $a^*a\preceq na$ and $a^*a\preceq-ma$ then $(m+n)a^*a\preceq(mn-mn)a=0$.
\item[$\mathfrak{c}+\mathfrak{c}=\mathfrak{c}$] For all $a,b\in A$, we have $0\preceq(a^*-b^*)(a-b)=a^*a-a^*b-b^*a+b^*b$ so
\[a^*b+b^*a\preceq a^*a+b^*b.\]
Thus if $a^*a\preceq na$ and $b^*b\preceq mb$ then
\begin{align*}
(a^*+b^*)(a+b)&\preceq a^*a+a^*b+b^*a+b^*b\\
&\preceq 2(a^*a+b^*b)\\
&\preceq2(ma^*a+nb^*b)\\
&\preceq4mn(a+b).\qedhere
\end{align*}
\end{itemize}
\end{proof}

Thus we get preorders $\preceq^\mathfrak{B}$ and $\preceq^\mathfrak{F}$ and a partial order $\preceq^\mathfrak{c}$ defined by
\begin{align*}
a\preceq^\mathfrak{B}b&\quad\Leftrightarrow\quad a\in\mathfrak{B}b.\\
a\preceq^\mathfrak{F}b&\quad\Leftrightarrow\quad b\in a\bullet\mathfrak{F}.\\
a\preceq^\mathfrak{c}b&\quad\Leftrightarrow\quad b\in a+\mathfrak{c}.
\end{align*}
Noting that $a^*=c^*b^*\ \Leftrightarrow\ a=bc\ \Leftrightarrow\ a^\perp=b^\perp\bullet c^\perp$, we have
\[a^*\preceq^\mathfrak{B}b^*\quad\Leftrightarrow\quad b^\perp\preceq^\mathfrak{F}a^\perp.\]
Also $\mathfrak{c}\subseteq\mathfrak{r}$ and, if $a=cb$ and $c\in\mathfrak{B}$, then $a^*a=b^*c^*cb\preceq b^*b$ so
\begin{align}
\mathbin{\preceq^\mathfrak{c}}\ &\subseteq\ \mathbin{\preceq^\mathfrak{r}}.\label{cr}\\
\mathbin{\preceq^\mathfrak{B}}\ &\subseteq\ \mathbin{\preceq^*}.\label{B|r|^2}
\end{align}
Moreover, \eqref{cr} is almost always a strict inclusion, as $\mathbin{\preceq^\mathfrak{c}}\cap\mathbin{\equiv}$ is $\mathbin{=}$, i.e.
\[\mathfrak{c}\cap A_\mathrm{sk}=\{0\}.\]
For if $a\in A_\mathrm{sk}$ then $a^*a\preceq na\equiv na^*$ implies $2a^*a\preceq n(a+a^*)=0$ and hence $a=0$.  So if $\mathfrak{c}=\mathfrak{r}$ then $A_\mathrm{sk}=\{0\}$, which means $^*$ is the identity and hence $A$ is commutative.  Even this does not guarantee $\mathfrak{c}=\mathfrak{r}$, for example if $A=\mathbb{Z}^\mathbb{N}$ then
\[(1,4,9,\ldots)\in|A|^2\setminus\mathfrak{c}.\]

We should also point out here that in C*-algebras, the various subsets we have defined correspond to their Banach algebra counterparts.  Specifically, with $V(a)$ denoting the numerical range of $a$ (see \cite{BonsallDuncan1973}), for C*-algebra $A$ we have
\begin{align*}
&&&&|A|^2=A_\Sigma=A_+&=\{a\in A:V(a)\subseteq\mathbb{R}_+\}\\
&&&&\subseteq\mathbb{R}_+\mathfrak{F}=\mathfrak{c}\subseteq\mathfrak{r}&=\{a\in A:V(a)\subseteq\mathbb{R}_++i\mathbb{R}\}.\\
&&&&\mathfrak{B}&=\{a\in A:||a||\leq1\}.\\
&&&&\mathbin{\preceq^*}&\subseteq\mathbin{\preceq^\mathfrak{r}}\quad\text{on }A_+.\\
&&&&\mathbin{\preceq^*}&=\mathbin{\preceq^\mathfrak{B}}\quad\text{if $A$ is a von Neumann algebra}.
\end{align*}
For the last two results see \cite{Blackadar2013} Proposition II.3.1.10 and \cite{Pedersen1998} Theorem 2.1.

\section{Orthogonality}\label{O}

We can now say more about the orthogonality relation $\perp$ defined \autoref{S}.
\begin{align}
&&b^*\perp a^*\hspace{4pt}&\Leftrightarrow\quad a\perp b.&&\label{b*a*=0}\\
&&a^*a\perp b\quad&\Leftrightarrow\quad a\perp b.&&\label{aa*b=0}\\
&\text{For }a\in\mathfrak{c}\cup A_+&b^*a\perp b\quad&\Leftrightarrow\quad a\perp b.&&\label{aba*=0}\\[2\jot]
&\text{For }a\in\mathfrak{c}\cup A_+&a\preceq^\mathfrak{r} c\perp b\quad&\Rightarrow\quad a\perp b.&&\label{a<cperpb}\\
&\text{For }a\in\mathfrak{r}&a\preceq^\mathfrak{c}c\perp b\quad&\Rightarrow\quad a\perp b.&&\label{a<{c}cperpb}\\
&\text{For }a\in\mathfrak{r}\text{ and }c\in\mathfrak{c}\cup A_+&a+c\perp b\quad&\Leftrightarrow\quad a\perp b\text{ and }c\perp b.&&\label{a+cperpb}\\[2\jot]
&\text{For }a\in\mathfrak{c}\cup A_\mathrm{n}&a^*\perp b\quad&\Leftrightarrow\quad a\perp b.&&\label{a*perpb}\\
&\text{For }a\in\mathfrak{c}\cup A_+\text{ and }b\in\mathfrak{c}\cup A_\mathrm{n}&ba\perp bc\hspace{6pt}&\Leftrightarrow\quad a\perp bc.&&\label{caba=0}\\
&\text{For }a\in\mathfrak{c}\cup A_\mathrm{n}&a^2\perp b\quad&\Leftrightarrow\quad a\perp b.&&\label{a2b=0}
\end{align}

\begin{proof}\
\begin{itemize}
\item[\eqref{b*a*=0}]  If $ab=0$ then $b^*a^*=(ab)^*=0$.

\item[\eqref{aa*b=0}]  If $a^*ab=0$ then $b^*a^*ab=0$ so $ab=0$, by \eqref{properness}.

\item[\eqref{aba*=0}]  If $b^*ab=0$ and $a^*a\preceq na$ then
\[0\preceq b^*a^*ab\preceq nb^*ab=nb^*0=0.\]
so $b^*a^*ab=0$, by \eqref{A+cap-A+}, thus $ab=0$, by \eqref{properness}.  While if $na=c_1^*c_1+...+c_n^*c_m$ then $b^*c_k^*c_kb=0$, for all $k$, by \ref{antisymmetry} and $nb^*ab=0$.  Then \eqref{properness} yields $c_kb=0$ so $c_k^*c_kb=0$, for all $k$.  Summing yields $nab=0$ and hence $ab=0$.

\item[\eqref{a<cperpb}]  If $a^*a\preceq na$ then $ab=0$ follows from \eqref{properness} and \eqref{A+cap-A+} as
\[0\preceq b^*a^*ab\preceq nb^*ab\preceq nb^*cb=nb^*0=0.\]

If $a\in A_+$ then $0\preceq b^*ab\preceq b^*cb=0$ so $a\perp b$, by \eqref{A+cap-A+} and \eqref{aba*=0}.

\item[\eqref{a<{c}cperpb}]  As $c-a\in\mathfrak{c}$, $(c^*-a^*)(c-a)\preceq n(c-a)$, for some $n$.  Thus
\[b^*(c^*-a^*)(c-a)b\preceq nb^*(c-a)b\preceq nb^*cb=nb^*0=0\]
so $(c-a)b=0$, by \eqref{properness} and \eqref{A+cap-A+}.  Again using $cb=0$, we have $ab=0$.\qedhere

\item[\eqref{a+cperpb}]  If $a\perp b$ and $c\perp b$, certainly $a+c\perp b$.  The converse is \eqref{a<cperpb} and \eqref{a<{c}cperpb}.

\item[\eqref{a*perpb}]  If $a\in\mathfrak{c}$ then $a^*\equiv a\perp b$ yields $a^*\perp b$, by \eqref{a<cperpb}.  While if $a\in A_\mathrm{n}$, this follows from \eqref{properness} and $(ab)^*(ab)=b^*a^*ab=b^*aa^*b=(a^*b)^*(a^*b)$.

\item[\eqref{caba=0}]  By \eqref{a*perpb}, \eqref{properness} and \eqref{aba*=0}, $babc=0\Rightarrow b^*abc=0\Rightarrow c^*b^*abc=0\Rightarrow abc=0$.

\item[\eqref{a2b=0}]  If $a\in\mathfrak{c}$ then $a^*\equiv a\perp ab$ so $a^*ab=0$, by \eqref{a<cperpb}.  If $a\in A_\mathrm{n}$ then $b^*a^*aa^*ab=b^*a^{*2}a^2b=0$ so again $a^*ab=0$, by \eqref{properness}.  Now $ab=0$, by \eqref{aa*b=0}.
\end{itemize}
\end{proof}

Note that \eqref{caba=0} can fail for $a\in A_\mathrm{sa}$, even when $c=1$, for example when $A=M_2$, $a=\begin{bsmallmatrix}0&1\\1&0\end{bsmallmatrix}$ and $b=\begin{bsmallmatrix}1&0\\0&0\end{bsmallmatrix}$ we have $ab=\begin{bsmallmatrix}0&0\\1&0\end{bsmallmatrix}\neq0=bab$.

\begin{cor}\label{orthosym}
Orthogonality is symmetric on $\mathfrak{c}\cup A_\mathrm{n}$.
\end{cor}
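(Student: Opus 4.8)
The plan is to deduce symmetry directly from the equivalence \eqref{a*perpb}, which already permits swapping an element of $\mathfrak{c}\cup A_\mathrm{n}$ for its adjoint inside an orthogonality relation, combined with the elementary adjoint symmetry \eqref{b*a*=0}. No fresh computation with $\mathfrak{r}$, $\mathfrak{c}$ or properness should be required, since all of that labour was already absorbed into the proof of \eqref{a*perpb} (the $\mathfrak{c}$ case via $a^*\equiv a$ and \eqref{a<cperpb}, and the normal case via $(ab)^*(ab)=(a^*b)^*(a^*b)$).

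Concretely, suppose $a,b\in\mathfrak{c}\cup A_\mathrm{n}$ with $a\perp b$; the goal is $b\perp a$. First I would use that $a\in\mathfrak{c}\cup A_\mathrm{n}$ to apply \eqref{a*perpb}, converting $a\perp b$ into $a^*\perp b$. Taking adjoints — equivalently, invoking \eqref{b*a*=0} with $a^*$ and $b$ in the roles of $a$ and $b$ — turns $a^*\perp b$ into $b^*\perp a$. Finally I would use that $b\in\mathfrak{c}\cup A_\mathrm{n}$ to apply \eqref{a*perpb} a second time, now with $b$ as the distinguished element, converting $b^*\perp a$ into $b\perp a$, as desired.

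There is essentially no obstacle here: the corollary is just \eqref{a*perpb} applied once to the left factor and once to the right factor, with an adjoint taken in between, so the only thing to keep straight is which of $a$ and $b$ is being replaced by its adjoint at each step. It is worth noting that the hypothesis is used on \emph{both} factors — once on $a$ at the start of the chain and once on $b$ at its end — so neither membership in $\mathfrak{c}\cup A_\mathrm{n}$ is redundant.
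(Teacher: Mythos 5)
Your proof is correct and follows essentially the same route as the paper: the paper also derives the symmetry purely from the adjoint flip \eqref{b*a*=0} together with \eqref{a*perpb} applied once to each factor, arranging the resulting equivalences in a diagram of which your chain $a\perp b\Leftrightarrow a^*\perp b\Leftrightarrow b^*\perp a\Leftrightarrow b\perp a$ is a shortest path. No gaps.
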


\begin{proof}
For $a,b\in\mathfrak{c}\cup A_\mathrm{n}$, we have $a\perp b\ \Leftrightarrow\ b\perp a$ by
\begin{equation}\label{xperpy}
\begin{matrix}
  a\perp b && a\perp b^* & \Leftrightarrow & a^*\!\perp b^* && a^*\!\perp b\\
  \Updownarrow && \Updownarrow\ && \Updownarrow && \ \Updownarrow \\
  b^*\perp a^* & \Leftrightarrow & b\perp a^* && b\perp a & \Leftrightarrow & \ b^*\perp a,
\end{matrix}
\end{equation}
using \eqref{b*a*=0} for the $\Updownarrow$'s and \eqref{a*perpb} for the $\Leftrightarrow$'s.
\end{proof}

\begin{cor}
There are no non-zero nilpotents in $(\mathfrak{c}\cup A_+)(\mathfrak{c}\cup A_+)\cup A_\mathrm{n}$.
\end{cor}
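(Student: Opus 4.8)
The plan is to split the set $(\mathfrak{c}\cup A_+)(\mathfrak{c}\cup A_+)\cup A_\mathrm{n}$ into its two defining pieces and, in each, show that a vanishing power of order $n$ forces one of order $n-1$, so that downward induction collapses the element to $0$. Throughout I would use that $\mathfrak{c}\cup A_+\subseteq\mathfrak{c}\cup A_\mathrm{n}$, since $A_+\subseteq A_\mathrm{sa}\subseteq A_\mathrm{n}$, which frees either factor of a product to occupy whichever slot a lemma demands.

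For a normal nilpotent $a\in A_\mathrm{n}$ with $a^n=0$ and $n\geq2$, I would write $a^n=a^2a^{n-2}$ (reading $a^0=1$), interpret this as $a^2\perp a^{n-2}$, and invoke \eqref{a2b=0} — whose hypothesis $a\in\mathfrak{c}\cup A_\mathrm{n}$ is met — to obtain $a\perp a^{n-2}$, i.e. $a^{n-1}=0$. Iterating down to the first power gives $a=0$, so this case is essentially immediate.

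The substantive case is a nilpotent product $a=xy$ with $x,y\in\mathfrak{c}\cup A_+$ and $(xy)^n=0$, $n\geq2$. Here the tool is \eqref{caba=0}, which peels a repeated left factor off a vanishing product. Applied with $x$ in the role of $b$ and $y$ in the role of $a$ to the rewriting $(xy)^n=(x)(y)(x)\bigl(y(xy)^{n-2}\bigr)=0$, it yields $y(xy)^{n-1}=0$; applied a second time with the roles of $x$ and $y$ exchanged to $y(xy)^{n-1}=(y)(x)(y)(xy)^{n-2}=0$, it yields $(xy)^{n-1}=0$. Thus $(xy)^n=0\Rightarrow(xy)^{n-1}=0$, and downward induction forces $xy=0$. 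I expect the single genuine idea — and the main obstacle — to be recognizing that \eqref{caba=0} must be applied twice, switching which of $x,y$ sits in the $b$-slot, in order to lower the nilpotency order by one; once that is seen, checking that both $x$ and $y$ legitimately occupy the required slots is automatic from $\mathfrak{c}\cup A_+\subseteq\mathfrak{c}\cup A_\mathrm{n}$.
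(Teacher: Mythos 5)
Your proof is correct. The product case is exactly the paper's argument: the paper's entire proof of that half is the phrase ``iterating \eqref{caba=0}'', and your two applications per step, alternating which of $x,y$ occupies the $b$-slot, is precisely that iteration spelled out (and the slot-checking via $A_+\subseteq A_\mathrm{sa}\subseteq A_\mathrm{n}$ is as intended). The normal case is where you diverge, though only slightly: you lower the nilpotency index directly by writing $a^n=a^2\cdot a^{n-2}$ and cancelling with \eqref{a2b=0}, whereas the paper instead uses normality to get $(a^*a)^n=a^{*n}a^n=0$ and then feeds $a^*a\in A_+$ into the already-proved product case with second factor $1$, finishing with properness \eqref{properness}. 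Both are one-line reductions resting on lemmas already established in \autoref{O}, so nothing substantive is gained or lost either way; yours avoids the detour through $a^*a$, the paper's avoids a second downward induction. The only point worth flagging is that your base steps read $a^0=1$ (equivalently take $c=1$ in \eqref{caba=0}), which is harmless under the standing unitality assumption \ref{unital*ring} but would need a word if one wanted the non-unital generalization mentioned in the paper's footnote.
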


\begin{proof}
Iterating \eqref{caba=0} shows that $(ab)^n=0\ \Rightarrow\ ab=0$, for all $a,b\in\mathfrak{c}\cup A_+$.  By the $b=1$ case, $a\in A_\mathrm{n}$ and $a^n=0\ \Rightarrow\ (a^*a)^n=a^{*n}a^n=0\ \Rightarrow\ a^*a=0\ \Rightarrow\ a=0$.
\end{proof}

In fact, iterating \eqref{caba=0} and \eqref{a2b=0} shows that, for all $a,b\in\mathfrak{c}$ and $l,m,n\in\mathbb{N}$,
\begin{equation}\label{cprod}
a^l(ba)^mb^n=0\quad\Leftrightarrow\quad a\perp b\quad\Leftrightarrow\quad a^lb(ab)^ma^n=0.
\end{equation}
As $A_+^2\subseteq A_\mathrm{sa}^2\subseteq A_+$, this extends to arbitrary products in $A_+$, i.e. whenever $c_1,c_2,\cdots,c_n\in\{a,b\}\subseteq A_+$,
\[c_1c_2\cdots c_n=0\quad\Rightarrow\quad a\perp b.\]
So there are no non-zero nilpotents in the (*-)subsemigroup generated by $a,b\in A_+$.  This also applies to $|A|^2$ for any proper *-semigroup $A$ (see \cite{Bice2015} Corollary 3.6).

Unfortunately, \eqref{cprod} does not extend to arbitrary products in $\mathfrak{c}$.  For every $a\in\mathbb{C}$ has a cube-root $b$ with $\arg(b)\in(-\frac{\pi}{2},\frac{\pi}{2})$, so if $A=\mathbb{C}$ then $A=\mathfrak{c}^3$.  Thus if $A=M_n$ then $A_\mathrm{n}\subseteq\mathfrak{c}^3$, as normal matrices are diagonalizable, by the spectral theorem.  Now by the example mentioned before \autoref{orthosym}, we have $a,b\in\mathfrak{c}$ with $ba^3b=0\neq ab$.

\section{Fixators}\label{F}

For the fixator relation $\ll$ defined in \autoref{S}, we immediately see that
\[a\ll b\quad\Leftrightarrow\quad a=ab\quad\Leftrightarrow\quad b=a\bullet b\quad\Leftrightarrow\quad 0=ab^\perp\quad\Leftrightarrow\quad a\perp b^\perp.\]
Thus the results in \autoref{O} for $\perp$ yield corollaries for $\ll$, e.g. by \eqref{a<cperpb}, \eqref{a<{c}cperpb} and \eqref{a*perpb},
\begin{align}
\label{preceqaux}&\text{For }a\in\mathfrak{c}\cup A_+&a\preceq^\mathfrak{r}c\ll b\quad&\Rightarrow\quad a\ll b.&&\\
\nonumber&\text{For }a\in\mathfrak{r}&a\preceq^\mathfrak{c}c\ll b\quad&\Rightarrow\quad a\ll b.&&\\
\label{a*llb}&\text{For }a\in\mathfrak{c}\cup A_\mathrm{n}&a^*\ll b\quad&\Leftrightarrow\quad a\ll b.&&
\end{align}
Together with \eqref{b*a*=0} and $a^*\preceq b^*\ \Leftrightarrow\ a\preceq b\ \Leftrightarrow\ b^\perp\preceq a^\perp$ (for $\preceq^\mathfrak{c}$ too), we then have
\begin{align}
\label{preceqaux2}&\text{For }b\in\mathfrak{c}^\perp\cup A_+^\perp &a\ll c\preceq^\mathfrak{r}b\quad&\Rightarrow\quad a\ll b.&&\\
\nonumber&\text{For }b\in\mathfrak{r}^\perp&a\ll c\preceq^\mathfrak{c}b\quad&\Rightarrow\quad a\ll b.&&\\
\label{allb*}&\text{For }b\in\mathfrak{c}^\perp\cup A_\mathrm{n}&a\ll b^*\hspace{5pt}&\Leftrightarrow\quad a\ll b.&&
\end{align}
And by \autoref{orthosym}, for all $a\in\mathfrak{c}\cup A_\mathrm{n}$ and $b\in\mathfrak{c}^\perp\cup A_\mathrm{n}$, we have
\[a\ll b\quad\Leftrightarrow\quad a\perp b^\perp\quad\Leftrightarrow\quad b^\perp\!\perp a\quad\Leftrightarrow\quad b^\perp\ll a^\perp.\]
So on $\frac{1}{2}\mathfrak{F}$ and $A_\mathrm{n}$, $a\mapsto a^*$ and $a\mapsto a^\perp$ are $\ll$-isotone and $\ll$-antitone bijections.

We can also replace $\preceq^\mathfrak{r}$ and $\preceq^\mathfrak{c}$ above with $\ll$, $\preceq^\mathfrak{B}$, $\preceq^\mathfrak{F}$ or $\preceq^*$, or even the preorders $\mathbin{\preceq^A}\ \subseteq\ \mathbin{\ll}\cap\mathbin{\preceq^\mathfrak{B}}$ and $\mathbin{\preceq^\bullet}\ \subseteq\ \mathbin{\preceq^\mathfrak{F}}$ defined by
\begin{align*}
a\preceq^Ab\quad&\Leftrightarrow\quad a\in Ab.\\
a\preceq^\bullet b\quad&\Leftrightarrow\quad b\in a\bullet A.
\end{align*}
\begin{align}
\label{preceqA}&&a\preceq^Ac\ll b\quad&\Rightarrow\quad a\ll b.&&\\
\label{BB*r2}&\text{For }b\in\mathfrak{B}& a\preceq^*c\ll b\quad&\Rightarrow\quad a\ll b.&&\\
\label{preceqbullet}&& a\ll c\preceq^\bullet b\quad&\Rightarrow\quad a\ll b.&&\\[2\jot]
&\text{For }b\in\mathfrak{B}& a\ll c\preceq^\mathfrak{B}b\quad&\Rightarrow\quad a\ll b^*b.&&\label{preBaux}\\
&\text{For }b\in\tfrac{1}{2}\mathfrak{F}& a\ll c\preceq^\mathfrak{B}b\quad&\Rightarrow\quad a\ll b.&&\label{Baux}\\
&\text{For }b\in\tfrac{1}{2}\mathfrak{F}\text{ and }c\in\mathfrak{B}& a\ll cb\quad&\Leftrightarrow\quad a\ll b\text{ and }a\ll c.&&\label{allcb}\\[2\jot]
&\text{For }b\in\tfrac{1}{2}\mathfrak{F}& a\ll c\preceq^*b\quad&\Rightarrow\quad a\ll b.&&\label{r2aux}\\
&\text{For }a\in\tfrac{1}{2}\mathfrak{F}\text{ and }b\in\mathfrak{B}& a\preceq^\mathfrak{F}c\ll b\quad&\Rightarrow\quad a\ll b.&&\label{Faux}
\end{align}

\begin{proof}\
\begin{itemize}
\item[\eqref{preceqA}]  See \eqref{preceqll}.

\item[\eqref{BB*r2}]  If $a\preceq^*c\ll b$ then $a^*a\preceq c^*c\preceq^Ac\ll b$ so $a^*a\ll b$, by \eqref{preceqaux} and \eqref{preceqA}, as $a^*a\in|\mathfrak{B}|^2\subseteq\frac{1}{2}\mathfrak{F}\subseteq\mathfrak{c}$.  Then $a^*ab^\perp=0$ gives $ab^\perp=0$, by \eqref{b*a*=0}.

\item[\eqref{preceqbullet}]  Like in \eqref{preceqll}, if $a\ll c\preceq^\bullet b$ then $b=c\bullet d$ so $a\bullet b=a\bullet c\bullet d=c\bullet d=b$.

\item[\eqref{preBaux}]  If $b,c\in\mathfrak{B}$ and $a\ll cb\in\mathfrak{B}\mathfrak{B}=\mathfrak{B}$ then $a\ll b^*c^*$, by \eqref{allb*}.  But $a\ll d,e$ implies $a\ll de$ so $a\ll b^*c^*cb\preceq b^*b$ and hence $a\ll b^*b$, by \eqref{preceqaux2}.

\item[\eqref{Baux}]  By \eqref{preBaux}, $a\ll b^*b\preceq b$ so $a\preceq b$, by \eqref{preceqaux}.

\item[\eqref{allcb}]  If $a\ll c,b$ then $acb=ab=a$.  While if $a\ll cb$ then $a\ll b$, by \eqref{Baux}.  Then $a\ll(cb)^*=b^*c^*$ and $a\ll b^*$ so $a=ab^*c^*=ac^*$, i.e. $a\ll c^*$ so $a\ll c$.

\item[\eqref{r2aux}]  As $c\preceq^*b\in\frac{1}{2}\mathfrak{F}\subseteq\mathfrak{B}$, we have $c^*c\preceq b^*b\preceq1$ so $c\in\mathfrak{B}$ too.  Thus $a\ll1c\preceq^*b$ implies $a\ll c^*c\preceq b^*b\preceq b$, by \eqref{preBaux}, so $a\ll b$, by \eqref{preceqaux}.

\item[\eqref{Faux}]  As $\frac{1}{2}\mathfrak{F}=\frac{1}{2}\mathfrak{F}^*$, $a^*a\preceq a\ \Leftrightarrow\ aa^*\preceq a^*\ \Leftrightarrow\ a\preceq a+a^*-aa^*$ so
\[\tfrac{1}{2}\mathfrak{F}=\{a\in A:a\preceq a\bullet a^*\}.\]
If $a\preceq^\mathfrak{F}c\ll b$ then $a\bullet d\ll b$, for some $d\in\mathfrak{F}$, so $b^\perp\ll a^\perp d^\perp\equiv d^{\perp*}a^{\perp*}$.  Thus $b^\perp\ll a^\perp a^{\perp*}$, by \eqref{preBaux}, so $a\preceq a\bullet a^*\ll b$ and $a\ll b$, by \eqref{preceqaux}.\qedhere
\end{itemize}
\end{proof}

\begin{cor}\label{bigaux}
$\ll$ is auxiliary to $\preceq^\mathfrak{r},\preceq^\mathfrak{c},\preceq^\mathfrak{B},\preceq^\mathfrak{F}$ and $\preceq^*$ on $\frac{1}{2}\mathfrak{F}$.
\end{cor}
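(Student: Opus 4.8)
The plan is to unwind \autoref{bigaux}, for each of the five preorders $\preceq\in\{\preceq^\mathfrak{r},\preceq^\mathfrak{c},\preceq^\mathfrak{B},\preceq^\mathfrak{F},\preceq^*\}$, into the three statements defining auxiliarity, all read on $\tfrac{1}{2}\mathfrak{F}$: the containment $\mathbin{\ll}\subseteq\mathbin{\preceq}$, the left-auxiliary identity $\mathbin{\ll}\circ\mathbin{\preceq}=\mathbin{\ll}$, and the right-auxiliary identity $\mathbin{\preceq}\circ\mathbin{\ll}=\mathbin{\ll}$. Each of these preorders is reflexive (since $0\in\mathfrak{r}\cap\mathfrak{c}\cap\mathfrak{F}$, $1\in\mathfrak{B}$, and $\preceq^*$ inherits reflexivity from $\preceq^\mathfrak{r}$), so the inclusions $\mathbin{\ll}\subseteq\mathbin{\ll}\circ\mathbin{\preceq}$ and $\mathbin{\ll}\subseteq\mathbin{\preceq}\circ\mathbin{\ll}$ are automatic, taking the interpolating term to be $b$ and $a$ respectively. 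Hence it suffices to verify, for all $a,b,c\in\tfrac{1}{2}\mathfrak{F}$, the two composition inclusions $a\ll c\preceq b\Rightarrow a\ll b$ and $a\preceq c\ll b\Rightarrow a\ll b$, together with $a\ll b\Rightarrow a\preceq b$.

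The two composition inclusions are exactly the displayed implications proved just above, once I feed in the membership facts $\tfrac{1}{2}\mathfrak{F}\subseteq\mathfrak{c}\cap\mathfrak{r}\cap\mathfrak{B}\cap\mathfrak{F}$ and $\tfrac{1}{2}\mathfrak{F}=(\tfrac{1}{2}\mathfrak{F})^\perp$ from \autoref{B&C}, the latter giving $\tfrac{1}{2}\mathfrak{F}\subseteq\mathfrak{c}^\perp\cap\mathfrak{r}^\perp$. Concretely, the right-auxiliary inclusions come from \eqref{preceqaux} and its $\preceq^\mathfrak{c}$-analogue (as $a\in\mathfrak{c}\cap\mathfrak{r}$), from \eqref{preceqA} via $\mathbin{\preceq^\mathfrak{B}}\subseteq\mathbin{\preceq^A}$, from \eqref{Faux} (as $a\in\tfrac{1}{2}\mathfrak{F}$, $b\in\mathfrak{B}$), and from \eqref{BB*r2} (as $b\in\mathfrak{B}$); the left-auxiliary inclusions come from \eqref{preceqaux2} and its $\preceq^\mathfrak{c}$-analogue (as $b\in\mathfrak{c}^\perp\cap\mathfrak{r}^\perp$), from \eqref{Baux} (as $b\in\tfrac{1}{2}\mathfrak{F}$), from \eqref{preceqbullet} via $\mathbin{\preceq^\mathfrak{F}}\subseteq\mathbin{\preceq^\bullet}$, and from \eqref{r2aux} (as $b\in\tfrac{1}{2}\mathfrak{F}$). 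Each side-condition holds simply because the relevant element lies in $\tfrac{1}{2}\mathfrak{F}$, and $\mathbin{\preceq^\mathfrak{B}}\subseteq\mathbin{\preceq^A}$, $\mathbin{\preceq^\mathfrak{F}}\subseteq\mathbin{\preceq^\bullet}$ are immediate from $\mathfrak{B},\mathfrak{F}\subseteq A$.

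It remains to establish the containments $\mathbin{\ll}\subseteq\mathbin{\preceq}$ on $\tfrac{1}{2}\mathfrak{F}$. Three are routine: if $a\ll b$ then $a=ab\in\mathfrak{B}b$ (as $a\in\mathfrak{B}$) gives $a\preceq^\mathfrak{B}b$, whence $a\preceq^*b$ by \eqref{B|r|^2}; and $b=a\bullet b\in a\bullet\mathfrak{F}$ (as $b\in\mathfrak{F}$) gives $a\preceq^\mathfrak{F}b$. I expect the real work to be $\mathbin{\ll}\subseteq\mathbin{\preceq^\mathfrak{c}}\subseteq\mathbin{\preceq^\mathfrak{r}}$, the last inclusion being \eqref{cr}. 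The key is to upgrade $a\ll b$ to the two-sided $a=ab=ba$: since $\tfrac{1}{2}\mathfrak{F}\subseteq\mathfrak{c}$ and orthogonality is symmetric on $\mathfrak{c}$ by \autoref{orthosym}, from $a\ll b\Leftrightarrow a\perp b^\perp$ we also get $b^\perp\perp a$, i.e. $a=ba$. Setting $d=b-a$ then yields the commuting factorizations $d=a^\perp b=ba^\perp$ with $a^\perp,b\in\tfrac{1}{2}\mathfrak{F}$. Applying the congruence $x\mapsto b^*xb$ to $a^{\perp*}a^\perp\preceq a^\perp$, and $x\mapsto a^{\perp*}xa^\perp$ to $b^*b\preceq b$, gives $d^*d\preceq b^*d$ and $d^*d\preceq a^{\perp*}d$; adding these and using $b^*+a^{\perp*}=(b+a^\perp)^*=(1+d)^*=1+d^*$ collapses everything to $2d^*d\preceq d+d^*d$, that is $d^*d\preceq d$. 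Thus $d=b-a\in\tfrac{1}{2}\mathfrak{F}\subseteq\mathfrak{c}$, giving $a\preceq^\mathfrak{c}b$ and completing all five cases.
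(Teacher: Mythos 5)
Your proof is correct and follows essentially the same route as the paper's: the two composition inclusions are exactly the displayed implications of \autoref{F} (fed the memberships $\tfrac{1}{2}\mathfrak{F}\subseteq\mathfrak{c}\cap\mathfrak{B}$ and $\tfrac{1}{2}\mathfrak{F}=(\tfrac{1}{2}\mathfrak{F})^\perp$, plus the trivial inclusions $\mathbin{\preceq^\mathfrak{B}}\subseteq\mathbin{\preceq^A}$ and $\mathbin{\preceq^\mathfrak{F}}\subseteq\mathbin{\preceq^\bullet}$), so everything reduces to $\mathbin{\ll}\subseteq\mathbin{\preceq}$ on $\tfrac{1}{2}\mathfrak{F}$, which is immediate for $\preceq^\mathfrak{B},\preceq^*,\preceq^\mathfrak{F}$ and amounts to showing $b-a\in\tfrac{1}{2}\mathfrak{F}$ for $\preceq^\mathfrak{c}$ and $\preceq^\mathfrak{r}$. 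The only divergence is your final computation: after upgrading to $a=ab=ba$ (the paper instead uses $a^*\ll b$ via \eqref{a*llb}, which is the same fact in adjoint form), your two congruence applications and the identity $b^*+a^{\perp*}=1+d^*$ do correctly yield $d^*d\preceq d$, but the paper gets there in one line, either by expanding $(b^*-a^*)(b-a)\preceq b-a$ directly or by observing $b-a=a+b-2ab=a*b\in\tfrac{1}{2}\mathfrak{F}*\tfrac{1}{2}\mathfrak{F}=\tfrac{1}{2}\mathfrak{F}$.
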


\begin{proof}  By the results above, it only remains to show that $\ll\ \subseteq\ \preceq^\mathfrak{r},\preceq^\mathfrak{c},\preceq^\mathfrak{B},\preceq^\mathfrak{F},\preceq^*$ on $\frac{1}{2}\mathfrak{F}$.  Actually $\mathbin{\ll}\subseteq\mathbin{\preceq^\mathfrak{B}}(\subseteq\mathbin{\preceq^*}$ by \eqref{B|r|^2}$)$ is immediate on $\frac{1}{2}\mathfrak{F}\subseteq\mathfrak{B}$, as is $\mathbin{\ll}\subseteq\mathbin{\preceq^\mathfrak{F}}$ on $\frac{1}{2}\mathfrak{F}\subseteq\mathfrak{F}$, remembering that $a\ll b\ \Leftrightarrow\ b=a\bullet b$.  Lastly, for $\ll\ \subseteq\ \preceq^\mathfrak{r},\preceq^\mathfrak{c}$ on $\frac{1}{2}\mathfrak{F}$, if $a,b\in\frac{1}{2}\mathfrak{F}\subseteq\mathfrak{c}$ then $a\ll b$ implies $a^*\ll b$, by \eqref{a*llb}, so
\[(b^*-a^*)(b-a)=b^*b-a-a^*+a^*a\preceq b-a-a^*+a^*=b-a.\]
Thus $b-a\in\frac{1}{2}\mathfrak{F}\subseteq\mathfrak{c}\subseteq\mathfrak{r}$.  Alternatively, by $a\ll b$ and $\frac{1}{2}\mathfrak{F}*\frac{1}{2}\mathfrak{F}=\frac{1}{2}\mathfrak{F}$,
\[b-a=a+b-2a=a+b-2ab=a*b\in\tfrac{1}{2}\mathfrak{F}\subseteq\mathfrak{c}\subseteq\mathfrak{r}.\qedhere\]
\end{proof}

We now examine the $\ll$-lattice structure of subsets containing $A^1_+=\frac{1}{2}\mathfrak{F}\cap A_\mathrm{sa}$.
\begin{align}
\label{cA+lat}A_+\subseteq B\subseteq A\quad&\Rightarrow\quad B\text{ is a $\ll$-semilattice}.\\
\label{halfFA1+lat}A^1_+\subseteq B\subseteq\tfrac{1}{2}\mathfrak{F}\hspace{6pt}&\Rightarrow\quad B\text{ is a $\ll$-lattice}.\\
\label{BA1salat}2\in A^{-1}\quad\text{and}\quad A^1_+\subseteq B\subseteq\mathfrak{B}\quad&\Rightarrow\quad B\text{ is a $\ll$-lattice}.
\end{align}
\begin{proof}\
Iterating \eqref{a+cperpb} and \eqref{allcb}, we see that sums in $\mathfrak{c}\cup A_+$ are $\ll$-supremums and products in $\frac{1}{2}\mathfrak{F}$ are $\ll$-infimums, i.e. (with the product taken in any order)
\begin{align}
\label{llsum}&\text{For finite }F\subseteq\mathfrak{c}\cup A_+&\sum F&=\bigvee F.&&\\
\label{llprod}&\text{For finite }F\subseteq\tfrac{1}{2}\mathfrak{F}&\prod F&=\bigwedge F.&&
\end{align}
As $a^*a+b^*b\in A_+$, for all $a,b\in A$, \eqref{cA+lat} follows from \eqref{aa*b=0} and \eqref{llsum}.  Likewise, as $a^*b^*ba\in|\mathfrak{B}|^2\subseteq A^1_+$, for all $a,b\in\frac{1}{2}\mathfrak{F}\subseteq\mathfrak{B}$, and $a\mapsto a^\perp$ is a $\ll$-antitone bijection, \eqref{halfFA1+lat} follows from \eqref{allb*} and \eqref{llprod}.  If $2\in A^{-1}$ then \eqref{BA1salat} follows from \eqref{halfFA1+lat}, $a=_\ll a^*a\in\frac{1}{2}\mathfrak{F}$ and $a=^\ll\frac{1}{2}(1+a)\in\frac{1}{2}\mathfrak{F}$, for all $a\in\mathfrak{B}$, as
\[a\ll b\quad\Leftrightarrow\quad a\perp b^\perp\quad\Leftrightarrow\quad a\perp\tfrac{1}{2}b^\perp\quad\Leftrightarrow\quad a\ll(\tfrac{1}{2}b^\perp)^\perp=\tfrac{1}{2}(1+b).\qedhere\]
\end{proof}

Thus $\mathbin{\ll}\circ\mathbin{\ll}=\mathbin{\ll}\ \Leftrightarrow\ \mathbin{\ll}_\mathsf{F}\circ\mathbin{\ll}_\mathsf{F}=\mathbin{\ll}_\mathsf{F}$ on $\tfrac{1}{2}\mathfrak{F}$ and $A^1_+$ (and $\mathfrak{B}$ and $A^1_\mathrm{sa}$ if $2\in A^{-1}$).  In fact, it does not matter which subset we consider as $a=^\ll a^*a=_\ll a$, for $a\in\frac{1}{2}\mathfrak{F}$, and, when we identify $B$ with equality on $B$ (i.e. the relation $\mathbin{=}\cap B\times B$),
\[\mathbin{\ll\circ\mathbin{A^1_+}\circ\ll}\ =\ \mathbin{\ll\circ\mathbin{\tfrac{1}{2}\mathfrak{F}}\circ\ll}\ =\ \mathbin{\ll\circ\mathbin{\mathfrak{B}}\circ\ll}.\]
\begin{proof}
If $a\ll b\ll c$ for $b\in\mathfrak{B}$ then $a\ll b^*b\ll c$, by \eqref{aa*b=0} and \eqref{preBaux}.
\end{proof}

For (possibly non-unital) C*-algebra $A$, $\mathbin{\perp}=\mathbin{\ll\circ\perp}$ on $A^1_+$ is the defining property of a SAW*-algebra (see \cite{Pedersen1986}).  As above, we see that $A$ is SAW* iff $\mathbin{\perp}=\mathbin{\ll\circ\perp}$ on $\frac{1}{2}\mathfrak{F}$ or $\mathfrak{B}$ iff $A$ is `Riesz SAW*' in that $\mathbin{\perp_\mathsf{F}}=\mathbin{\ll_\mathsf{F}\circ\perp_\mathsf{F}}$ on $A^1_+$, $\frac{1}{2}\mathfrak{F}$ or $\mathfrak{B}$.  If $A$ is a unital C*-algebra then $\mathbin{\perp}=\mathbin{\ll\circ\perp}$ is equivalent to $\mathbin{\ll}=\mathbin{\ll\circ\ll}$ so
\[A\text{ is SAW*}\quad\Leftrightarrow\quad\ll\text{ has (Riesz) interpolation on }A^1_+,\tfrac{1}{2}\mathfrak{F}\text{ or }\mathfrak{B}.\]

\section{Projections}\label{PJ}

Here we consider the idempotents and projections
\begin{align*}
\mathcal{I}&=\{p\in A:p\ll p\}.\\
\mathcal{P}&=\{p\in A:p\ll p^*\}.
\end{align*}
Note $\mathcal{P}\subseteq|\mathfrak{B}|^2\subseteq A_\mathrm{sa}$ immediately yields $\mathcal{P}=\mathcal{I}\cap|\mathfrak{B}|^2\subseteq\mathcal{I}\cap A_\mathrm{sa}$, even in an arbitrary *-semigroup.  In fact, by \cite{Berberian1972} \S2 Exercise 1A, we have $\mathcal{P}=\mathcal{I}\cap A_\mathrm{n}$, even in an arbitrary proper *-ring (see below).  Thus $\ll$ is a partial order on $\mathcal{P}$, as $\ll$ is reflexive on $\mathcal{I}$ and antisymmetric on $A_\mathrm{sa}$.  Reflexivity combined with auxiliarity on $\mathcal{P}\subseteq\frac{1}{2}\mathfrak{F}$ immediately yields
\begin{align*}
\ll\ &=\ \preceq^\mathfrak{r},\preceq^*,\preceq^\mathfrak{c},\preceq^A,\preceq^\bullet\quad\text{on }\mathcal{P}.\\
\text{Moreover}\quad\mathcal{P}\ &=\ \mathcal{I}\, \cap\, (A_\mathrm{n}\, \cup\, \mathfrak{r}\, \cup\, \mathfrak{r}^\perp\, \cup\, A_\mathrm{sa}A_+^\perp\, \cup\, A_+^\perp A_\mathrm{sa}),
\end{align*}
and hence $\mathcal{P}=\mathcal{I}\cap\mathfrak{B}=\mathcal{I}\cap\mathfrak{F}=\mathcal{I}\cap\frac{1}{2}\mathfrak{F}$, as $|\mathfrak{B}|^2\subseteq\mathfrak{B},\mathfrak{F},\frac{1}{2}\mathfrak{F}\subseteq\mathfrak{r}\cup\mathfrak{r}^\perp$.
\begin{proof}\
\begin{itemize}
\item[($A_\mathrm{n}\cap\mathcal{I}=\mathcal{P}$)]  If $p\in A_\mathrm{n}$ then $p\ll p$ implies $p\ll p^*$, by \eqref{allb*}.

\item[($\mathfrak{r}\cap\mathcal{I}=\mathcal{P}$)]  If $p\in\mathfrak{r}$ then $p+p^*\in A_+$.  If $p\in\mathcal{I}$ too then
\[p^\perp (p+p^*)p^{\perp*}=p^\perp pp^{\perp*}+p^\perp p^*p^{\perp*}=0p^{\perp*}+p^\perp0=0.\]
By \eqref{aba*=0}, $pp^{\perp*}=(p+p^*)p^{\perp*}=0$ so $p=pp^*$.

\item[($\mathfrak{r}^\perp\cap\mathcal{I}=\mathcal{P}$)]  If $a\in\mathcal{I}$ then $a^\perp a^\perp=1-a-a+a=a^\perp$, so $\mathcal{I}=\mathcal{I}^\perp$.  Thus $\mathcal{P}=\mathcal{I}\cap A_\mathrm{n}=\mathcal{I}^\perp\cap A_\mathrm{n}^\perp=\mathcal{P}^\perp$ and hence $\mathcal{P}=\mathcal{P}^\perp=\mathcal{I}^\perp\cap\mathfrak{r}^\perp=\mathcal{I}\cap\mathfrak{r}^\perp$.

\item[($A_\mathrm{sa}A_+^\perp\cap\mathcal{I}=\mathcal{P}$)]  If $a\in A_\mathrm{sa}$, $b\in A_+^\perp$ and $ab=abab$ then $bab^\perp ab=babab-bab=bab-bab=0$ and hence $bab^\perp=0$, by \eqref{aba*=0}, so $ba=bab=(bab)^*=ab\in A_\mathrm{sa}\cap\mathcal{I}=\mathcal{P}$.

\item[($A_+^\perp A_\mathrm{sa}\cap\mathcal{I}=\mathcal{P}$)]  Note $\mathcal{P}=\mathcal{P}^*=(A_\mathrm{sa}A_+^\perp\cap\mathcal{I})^*=A_+^\perp A_\mathrm{sa}\cap\mathcal{I}$.\qedhere
\end{itemize}
\end{proof}

Another fact possibly worth noting is the following.
\begin{equation}\label{plla}
\text{For }p\in\mathcal{P}\text{ and }a\in A_+,\hspace{50pt}p\ll a\quad\Rightarrow\quad p\preceq a.
\end{equation}
\begin{proof}
If $p=pa$ then $p=ap$ so $a-p=a-ap=ap^\perp=ap^{\perp2}=p^\perp ap^\perp\in A_+$.
\end{proof}

We can also use $\mathcal{I}$ to characterize $\ll$, $\perp$ and commutativity on $\mathcal{P}$ as follows.
\begin{align}
\label{pq}pq\in\mathcal{I}\quad&\Leftrightarrow\quad pq=qp.\\
\label{p+q}p+q\in\mathcal{I}\quad&\Leftrightarrow\quad p\perp q.\\
\label{p-q}p-q\in\mathcal{I}\quad&\Leftrightarrow\quad q\ll p.
\end{align}
\begin{proof}  The $\Leftarrow$ parts are immediate, even in an arbitrary *-ring.
\begin{itemize}
\item[\eqref{pq}]  As $A_\mathrm{sa}A_+^\perp\cap\mathcal{I}=\mathcal{P}$, certainly $\mathcal{P}\mathcal{P}\cap\mathcal{I}\subseteq A_\mathrm{sa}$ so $pq=(pq)^*=qp$.
\item[\eqref{p+q}]  If $p+q=(p+q)^2=p+pq+qp+q$ then $pq=-qp$ so $pq=ppq=-pqp\in A_\mathrm{sa}$ and hence $pq=(pq)^*=qp=-pq$ which, as $A$ is torsion-free, means $pq=0$.
\item[\eqref{p-q}]  If $p-q=(p-q)^2=p-pq-qp+q$ then $2q=pq+qp$ so $2pq=pq+pqp$ and hence $pq=pqp=(pqp)^*=qp$.  Thus $2q=2qp$ so again $q=qp$.\qedhere
\end{itemize}
\end{proof}

\section{Products}\label{C}
In this section we make the following additional standing assumption.
\begin{assumption}
\item\label{abA+} $A_+A_+\cap A_\mathrm{sa}=A_+$.
\end{assumption}
As $A_+=\frac{1}{n}A_+$, the apparently weaker assumption $A_\Sigma A_\Sigma\cap A_\mathrm{sa}\subseteq A_+$ would actually suffice.  Also, if $a,b\in A_\mathrm{sa}$ then $ab\in A_\mathrm{sa}\ \Leftrightarrow\ ab=(ab)^*=ba$, so \ref{abA+} is just saying that products of commuting *-positive elements are *-positive (which holds for C*-algebra $A$ \textendash\, see \cite{KadisonRingrose1983} Theorem 4.2.2(iv)).  Using \ref{abA+}, we have the following.
\begin{gather}
\label{A+capB}\qquad\qquad\quad A^1_+=A_+\cap\mathfrak{B}=A_+\cap\mathfrak{r}^\perp.\\
\label{B+C}\qquad\qquad\quad A_+A_+\cap-A_+=\{0\}.\\
\label{-ab=ba}\quad\text{For }a\in A_+\qquad\qquad\quad ab+ba=0\ \quad\Rightarrow\quad a\perp b.\qquad\qquad\qquad\qquad
\end{gather}
\begin{proof}\
\begin{itemize}
\item[\eqref{A+capB}]
Note $A_\mathrm{sa}\cap\tfrac{1}{2}\mathfrak{F}\subseteq A_\mathrm{sa}\cap\mathfrak{r}\cap\mathfrak{B}=A_+\cap\mathfrak{B}=A_+\cap\mathfrak{F}^\perp\subseteq A_+\cap\mathfrak{r}^\perp$.  If $a\in A_+\cap\mathfrak{r}^\perp$ then $a,a^\perp\in A_+$ so $aa^\perp=a^\perp a\in A_+$, by \ref{abA+}, and hence $a^2\preceq a$.

\item[\eqref{B+C}]  Combine \eqref{A+cap-A+} and \ref{abA+}.
\item[\eqref{-ab=ba}]  If $ab=-ba$ then $abb^*=-bab^*\in-A_+$ so $a\perp b$, by \eqref{aa*b=0} and \eqref{B+C}.\qedhere
\end{itemize}
\end{proof}

Actually, from now on, all we need is the strengthening of \eqref{A+cap-A+} given in \eqref{B+C}.

We call $a,b\in A_+$ with $a\perp b$ a \emph{decomposition} of $c\in A_\mathrm{sa}$ if $c=a-b$.  The following generalizes a standard result for C*-algebras (see \cite{KadisonRingrose1983} Proposition 4.2.3(iii)).

\begin{thm}\label{decomp}
Decompositions are unique.
\end{thm}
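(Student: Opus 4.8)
The plan is to reduce the two-sided statement to a uniqueness assertion about ``absolute values'' and to settle that first. Suppose $c=a-b=a'-b'$ are two decompositions, so $a,b,a',b'\in A_+$ with $a\perp b$ and $a'\perp b'$. By \autoref{orthosym} orthogonality is symmetric on $A_+\subseteq A_\mathrm{n}$, so in fact $ab=ba=0$ and $a'b'=b'a'=0$, whence the cross terms drop out and $s:=a+b$, $t:=a'+b'$ satisfy $s^2=a^2+b^2=(a-b)^2=c^2$ and likewise $t^2=c^2$. Since $a-b=a'-b'$ gives $a-a'=b-b'$, we have $(a+b)-(a'+b')=2(a-a')$, i.e. $a-a'=\tfrac12(s-t)$; as $(A,+)$ is torsion-free it therefore suffices to prove $s=t$, after which $a=a'$ and $b=b'$ follow at once. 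Thus the theorem reduces to uniqueness of the positive square root $s\in A_+$ of $c^2$.

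First I would extract commutativity of $s$ and $t$. From $s^2=t^2$ we get $(s+t)(s-t)+(s-t)(s+t)=2(s^2-t^2)=0$, and since $s+t\in A_+$, the implication \eqref{-ab=ba} (which is where \eqref{B+C}, hence assumption \ref{abA+}, enters) forces $(s+t)(s-t)=0=(s-t)(s+t)$. Expanding $(s+t)(s-t)=0$ and cancelling $s^2-t^2=0$ leaves $ts=st$; consequently $v:=s-t\in A_\mathrm{sa}$ commutes with both $s$ and $t$.

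The core is a two-sided sign estimate on $sv^2$. From $(s+t)v=0$ we get $sv=-tv$, and multiplying on the right by $v$ gives $sv^2=-tv^2$. Because $v=v^*$ and $s,t\in A_+$, the conjugates $vsv$ and $vtv$ lie in $A_+$ (using $a^*A_+a\subseteq A_+$), and since $v$ commutes with $s$ and $t$ these equal $sv^2$ and $tv^2$ respectively. Hence $sv^2\in A_+\cap(-A_+)=\{0\}$ by \eqref{A+cap-A+}, so $sv^2=0$ and likewise $tv^2=0$. Therefore $v^3=(s-t)v^2=sv^2-tv^2=0$, and since $v\in A_\mathrm{sa}\subseteq A_\mathrm{n}$ admits no non-zero nilpotent (equivalently, $v^4=0$ forces $v^2=v^*v=0$ and then $v=0$ by properness \eqref{properness}), we conclude $v=0$, i.e. $s=t$.

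I expect the main obstacle to be obtaining the commutativity of $s$ and $t$ without any functional calculus: the hypothesis $s^2=t^2$ is symmetric, yet passing from it to $st=ts$ is precisely where the ring-theoretic input \eqref{-ab=ba} is indispensable. Once commutativity is in hand, the positivity of the commuting product $sv^2$ combined with the sign flip $sv^2=-tv^2$ does all the remaining work, and the only thing to watch is that every product be written in a legitimate order, which is harmless here because $v$ commutes with everything involved.
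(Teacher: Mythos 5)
Your proof is correct, but it takes a genuinely different route from the paper's. The paper stays with the four elements $a,b,a',b'$ directly: from $(a-a')^2b'=-b'a'b'$ it deduces $b'\perp a'$ via \eqref{B+C} and \eqref{aba*=0}, symmetrically $a\perp b'$... (in its notation $b\perp c$ and $a\perp d$), then computes $a^2=ac=ca=c^2$ and concludes $(a-c)^2=0$, so $a=c$ by \eqref{properness}. You instead reduce to the statement that an element of $A_+$ is determined by its square: setting $s=a+b$, $t=a'+b'$ you get $s^2=t^2=c^2$, derive $st=ts$ from \eqref{-ab=ba} applied to $s+t\in A_+$ and $s-t$, and then kill $v=s-t$ by the sign argument $sv^2=vsv\in A_+$, $sv^2=-tv^2\in -A_+$, hence $v^3=0$ and $v=0$ by properness; torsion-freeness then recovers $a=a'$. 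Both arguments rest on the same input from \ref{abA+} (through \eqref{B+C}, respectively its consequence \eqref{-ab=ba}), and the paper's is somewhat shorter. What your route buys is a stronger standalone lemma: any two elements of $A_+$ with the same square are equal, with \emph{no} commutativity hypothesis. The paper proves square-root uniqueness only later (after \autoref{psr}) and only under the added assumption $ab=ba$; your use of \eqref{-ab=ba} shows that assumption is automatic, so your intermediate step actually sharpens that later remark.
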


\begin{proof}
If $a-b=c-d$ and $ab=0=cd$, for some $a,b,c,d\in A_+$, then
\[(a-c)^2b=(b-d)(a-c)b=-(b-d)cb=-bcb\]
so $b\perp c$, by \eqref{aba*=0} and \eqref{B+C}.  Likewise, $a\perp d$ so
\[a^2=a(a-b)=a(c-d)=ac=(a-b)c=(c-d)c=c^2\]
Thus $(a-c)^2=a^2-ac-ca+c^2=0$ so $a=c$, by \eqref{properness}, and hence $b=d$.
\end{proof}

For $B\subseteq A$ let $B'=\{a\in A:\forall b\in S(ab=ba)\}$.  Another standard C*-algebra fact is that any $a\in A_+$ has a *-positive square-root in $C^*(a)\subseteq\{a\}''$, where $C^*(a)$ is the C*-subalgebra generated by $A$.  This generalizes too as follows which, for example, implies that \eqref{plla} extends to $p\in A^{2\perp}_+$.

\begin{thm}\label{psr}
For $a\in A_+$ we have $a\in\{a^2\}''$.
\end{thm}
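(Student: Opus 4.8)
The plan is to prove the equivalent statement that every $x\in\{a^2\}'$, i.e. with $xa^2=a^2x$, automatically satisfies $xa=ax$. First I would reduce to a single self-adjoint/skew-adjoint case. Since $a^2\in A_\mathrm{sa}$, the commutant $\{a^2\}'$ is closed under $*$, so it contains $x+x^*$ and $x-x^*$; because $2x=(x+x^*)+(x-x^*)$ and $(A,+)$ is torsion-free, it is enough to show $xa=ax$ for an $x$ satisfying $x^*=\epsilon x$ with a fixed $\epsilon\in\{1,-1\}$ (self-adjoint when $\epsilon=1$, skew-adjoint when $\epsilon=-1$).

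For such an $x$, set $y=xa-ax$ and $w=xa$, so that $ax=\epsilon w^*$ and hence $y=w-\epsilon w^*$ and $y^*=-\epsilon y$. The key opening observation is the identity $xa^2-a^2x=ay+ya$, so the hypothesis becomes $ay+ya=0$. As $a\in A_+$, this is exactly the situation of \eqref{-ab=ba}, which immediately yields $a\perp y$, that is $ay=0=ya$; written out, these say $axa=a^2x$ and $xa^2=axa$.

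Next I would feed these two relations back into the products of $w$ and $w^*$. A short manipulation gives the two identities $w^2=w^{*2}$ and $ww^*=\epsilon w^2$, and from them $wy=w^2-\epsilon ww^*=0$; applying $*$ (and $y^*=-\epsilon y$) then also gives $yw^*=0$. The whole point of these is the factorisation $e:=y^*y=w^*y$, for which $e^2=w^*(yw^*)y=0$.

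The main obstacle — indeed the entire content of the theorem — is precisely this passage from the cheap orthogonality $a\perp y$ to the desired $y=0$: orthogonality to $a$ alone is far too weak, and what rescues it is the square-root structure encoded in the reduction $x^*=\epsilon x$, which ties $ax$ to $w^*=(xa)^*$ and thereby forces the nilpotency $e^2=0$. Once $e^2=0$ with $e=y^*y\in A_\mathrm{sa}$, properness \eqref{properness} gives $e^*e=e^2=0$, so $e=y^*y=0$ and then $y=0$ by a second appeal to \eqref{properness}. Thus $xa=ax$ in each of the two cases, and reassembling them via the torsion-free decomposition of $2x$ shows $a\in\{a^2\}''$.
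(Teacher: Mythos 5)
Your proof is correct, and it shares its crucial first step with the paper's: both arguments rest on the identity $a(xa-ax)+(xa-ax)a=xa^2-a^2x=0$ together with \eqref{-ab=ba} to conclude that $a$ is orthogonal to the commutator, i.e. $axa=a^2x$ and $xa^2=axa$. Where you diverge is in how the commutator $y=xa-ax$ is then forced to vanish. The paper keeps $x$ arbitrary and compensates by running the same orthogonality argument for $x^*$ and $xx^*$ (both of which lie in $\{a^2\}'$), obtaining $x^*a^2=ax^*a$ and $a^2xx^*=axx^*a$; these relations make $(ax-xa)(x^*a-ax^*)=(ax-xa)(ax-xa)^*$ collapse to $0$ term by term, and a single appeal to \eqref{properness} finishes. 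You instead normalize $x$ to be self- or skew-adjoint via the decomposition $2x=(x+x^*)+(x-x^*)$ and torsion-freeness (which the paper has indeed established under \ref{antisymmetry}), and this parity is exactly what ties $ax$ to $(xa)^*$ and yields $wy=yw^*=0$, hence the square-zero element $y^*y=w^*y$ and $y=0$ by two appeals to \eqref{properness}. The trade-off is minor: you pay for the case split and the extra use of torsion-freeness up front but get a cleaner closing computation; the paper's version is case-free but needs the auxiliary elements $x^*$ and $xx^*$. Both routes use exactly the same standing hypotheses, namely \eqref{properness} and the consequence \eqref{B+C} of \ref{abA+} that underlies \eqref{-ab=ba}.
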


\begin{proof}
If $ba^2=a^2b$ then $a^2b^*=b^*a^2$ and
\[a(ab-ba)=a^2b-aba=ba^2-aba=(ba-ab)a=-(ab-ba)a.\]
By \eqref{-ab=ba}, $a\perp ab-ba$ so $a^2b=aba$ and $b^*a^2=ab^*a$.  Also $bb^*a^2=ba^2b^*=a^2bb^*$ so the same argument applied to $bb^*$ instead of $b$ yields $a^2bb^*=abb^*a$.  Thus
\[(ab-ba)(b^*a-ab^*)=abb^*a-abab^*-bab^*a+ba^2b^*=a^2bb^*-a^2bb^*-bb^*a^2+bb^*a^2=0.\]
Thus $ab=ba$, by \eqref{properness}.  As $b\in\{a^2\}'$ was arbitrary, $a\in\{a^2\}''$.
\end{proof}

By \autoref{psr}, the positive square-root axiom (PSR) given in \cite{Berberian1972} \S13 Definition 9 reduces to $A_+=A_+^2$ in the presence of \ref{unital*ring} and \eqref{B+C}.  These positive square-roots are even unique, by \cite{Berberian1972} \S13 Exercise 10.  Indeed, if $a,b\in A_+$, $a^2=b^2$ and $ab=ba$ then $(a+b)(a-b)=a^2-ab+ba-b^2=0$ so
\[0=-(a-b)(a+b)(a-b)\preceq(a-b)(a-b)(a-b)\preceq(a-b)(a+b)(a-b)=0.\]
Thus $0=(a-b)^3=(a-b)^4$, by \ref{antisymmetry}, and hence $a=b$, by \eqref{properness}.  Actually, we already have a weak form of (PSR), as \eqref{A+capB} means $a^2\preceq a$, for all $a\in A_+\cap\mathfrak{B}$, so $A_+^2$ is $\preceq$-coinitial in $A_+\cap\mathfrak{B}\setminus\{0\}$.

If $A_+=A_+^2$, define $|a|=\sqrt{a^*a}$.  If $a\in A_\mathrm{sa}$ then $|a|^2=a^*a=a^2$ and hence $|a|a=a|a|$, by \autoref{psr}, so $(|a|+a)(|a|-a)=|a|^2-|a|a+a|a|-a^2=0$, i.e.
\begin{equation}\label{|a|}
|a|+a\perp|a|-a.
\end{equation}
So if $a,-a\preceq|a|$ and $2$ is invertible in $A$ then $\frac{1}{2}(|a|+a)$ and $\frac{1}{2}(|a|-a)$ form a decomposition of $a$.  Also \eqref{|a|} allows us to extend \eqref{B+C} if $\mathbin{\preceq^*}\subseteq\mathbin{\preceq^\mathfrak{r}}$ on $A_+$, yielding an elementary result which might be new even for C*-algebra $A$.

\begin{thm}
If $\mathbin{\preceq^*}\subseteq\mathbin{\preceq^\mathfrak{r}}$ on $A_+=A_+^2$ then $A_+A_+\cap-\mathfrak{r}=\{0\}$.
\end{thm}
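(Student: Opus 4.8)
The plan is to unwind the statement into a concrete commutator problem. Write a typical element of $A_+A_+\cap-\mathfrak{r}$ as $x=ab$ with $a,b\in A_+$. The condition $x\in-\mathfrak{r}$ says $-(x+x^*)\in A_+$, and since $a,b\in A_\mathrm{sa}$ this is precisely $ab+ba\in-A_+$. The key observation is that $-\mathfrak{r}\cap A_\mathrm{sa}=-A_+$, so once we know $ab=ba$ (whence $ab\in A_\mathrm{sa}$) the result collapses onto what we already have: then $ab\in A_+A_+\cap A_\mathrm{sa}=A_+$ by \ref{abA+}, while also $ab\in-A_+$, so $ab=0$ by \eqref{A+cap-A+}. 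Thus the entire problem reduces to showing that $ab+ba\in-A_+$ forces $a$ and $b$ to commute.

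The crux, and the only place where the hypothesis $\mathbin{\preceq^*}\subseteq\mathbin{\preceq^\mathfrak{r}}$ is used, is a comparison of $a+b$ with $g:=|a-b|\in A_+$ (so $g^2=(a-b)^2$, available since $A_+=A_+^2$). From $ab+ba\in-A_+$ I get
\[(a-b)^2-(a+b)^2=-2(ab+ba)\in A_+,\]
so that $(a+b)^2\preceq(a-b)^2=g^2$, i.e. $a+b\preceq^*g$ in $A_+$. Feeding this through the hypothesis yields $a+b\preceq^\mathfrak{r}g$, and as both sides are self-adjoint this gives $R:=g-a-b\in A_+$. In other words, the square-root comparison upgrades the merely ``scalar-looking'' inequality $(a+b)^2\le(a-b)^2$ to the genuinely stronger $|a-b|\ge a+b$, an inequality that would fail without the extra hypothesis.

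With $g\ge a+b$ secured I bring in \eqref{|a|}. Setting $P=g+(a-b)=R+2a$ and $Q=g-(a-b)=R+2b$, both lie in $A_+$ (as $R,2a,2b\in A_+$), and \eqref{|a|} gives $PQ=0=QP$. Since $2a\in A_+$ with $P-2a=R\in A_+\subseteq\mathfrak{r}$, i.e. $2a\preceq^\mathfrak{r}P$, and $P\perp Q$, the orthogonality-transfer \eqref{a<cperpb} yields $2a\perp Q$, hence $aQ=0$ and $Qa=(aQ)^*=0$; symmetrically $bP=0=Pb$. Expanding these four relations (using $g^2=(a-b)^2$) and substituting into $0=PQ$ leaves $0=2(ba-ab)$, so $ab=ba$ because $A$ is torsion-free, which completes the reduction above.

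The two short expansions in the final step are the only genuine computation; everything else is assembly of earlier tools. I expect the main obstacle to be precisely the identification of $g=|a-b|$ as the right auxiliary element together with the inequality $(a+b)^2\preceq(a-b)^2$ that activates the hypothesis — once $|a-b|\ge a+b$ is available, it is exactly the bounds $2a\le P$ and $2b\le Q$ that let \eqref{a<cperpb} push the orthogonality $P\perp Q$ down onto $a$ and $b$, and the rest is forced.
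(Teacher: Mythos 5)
Your proof is correct and is essentially the paper's: the same inequality $(a+b)^2\preceq(a-b)^2$ activates the hypothesis to give $a+b\preceq|a-b|$, the same orthogonal pair $P=|a-b|+(a-b)$ and $Q=|a-b|-(a-b)$ comes from \eqref{|a|}, and \eqref{a<cperpb} is what pushes the orthogonality $P\perp Q$ down onto $a$ and $b$. The only divergence is the endgame: the paper applies \eqref{a<cperpb} a second time to conclude $2a\perp 2b$, hence $ab=0$, outright, whereas you stop at $aQ=0=Qa$ and $bP=0=Pb$, expand to get $ab=ba$, and then appeal to \ref{abA+} --- correct, but a slightly longer detour, and note that \eqref{B+C} alone would finish from $ab=ba$, keeping within the weaker hypotheses the paper singles out as sufficient for this section.
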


\begin{proof}
If $a,b\in A_+$ and $ab\in-\mathfrak{r}$ then $ab\equiv ba\preceq0$ so
\[(a+b)^2=a^2+ab+ba+b^2\preceq a^2-ab-ba+b^2=(a-b)^2.\]
As $\mathbin{\preceq^*}\subseteq\mathbin{\preceq^\mathfrak{r}}$ on $A_+=A_+^2$, we have $a+b\preceq|a-b|$ and hence $2a\preceq|a-b|+a-b$ and $2b\preceq|a-b|+b-a$.  By \eqref{a<cperpb} and \eqref{|a|}, $2a\perp2b$ and hence $a\perp b$.
\end{proof}

\section{Blackadar *-Rings}\label{B*R}

Throughout this section we merely assume
\[A\text{ is a (possibly non-unital) *-ring}.\]

We define \emph{Blackadar} and, for any $R\subseteq A\times A$, \emph{$R$-Blackadar} as follows.
\begin{align}
\label{RB}A\text{ is $R$-Blackadar}\ &\Leftrightarrow\ \mathcal{P}\setminus\{0\}\text{ is $R$-coinitial in }A\setminus\{0\}.\\
\label{B}A\text{ is Blackadar}\ &\Leftrightarrow\ \forall a\in A\setminus\{0\}\hspace{15pt}\exists p\in\mathcal{P}\setminus\{1\}\ (\perp a)\subseteq(\ll p).\\
\text{So}\quad A\text{ is $\subseteq_\perp\!$-Blackadar}\ &\Leftrightarrow\ \forall a\in A\setminus\{0\}\hspace{15pt}\exists p\in\mathcal{P}\setminus\{0\}\ (\perp a)\subseteq(\perp p)\nonumber\\
\text{and}\quad A\text{ is $\ll$-Blackadar}\ &\Leftrightarrow\ (\ll a)\neq\{0\}\Rightarrow\exists p\in\mathcal{P}\setminus\{0\}\ (\ll p)\subseteq(\ll a).\nonumber
\end{align}

Replacing $\subseteq$ with $=$ in \eqref{B} would define a Rickart *-ring (see \cite{Berberian1972} \S3).  Also
\[A\text{ is weakly Rickart}\quad\Leftrightarrow\quad A\text{ is $=_\perp\!$-Blackadar},\]
by \cite{Berberian1972} \S5 Proposition 3 and the following.

\begin{prp}\label{wBproper}
Every $\subseteq_\perp\!$-Blackadar *-ring is proper.
\end{prp}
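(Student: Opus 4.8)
The plan is to establish properness directly by contradiction: starting from an element $a$ with $a^*a=0$, I would derive $a=0$. So suppose instead that $a\neq0$ and feed $a$ into the $\subseteq_\perp$-Blackadar hypothesis, which (as unwound above) supplies a projection $p\in\mathcal{P}\setminus\{0\}$ with $(\perp a)\subseteq(\perp p)$. Before using this I would record the two structural facts about $\mathcal{P}$ that the argument leans on, both of which hold in an arbitrary *-ring: from $p\in\mathcal{P}$, i.e. $p\ll p^*$ and hence $p=pp^*$, taking adjoints gives $p=p^*$, and then $p=pp^*=p^2$. Thus $p$ is a nonzero self-adjoint idempotent, and in particular $p\in\mathcal{I}$.

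The heart of the proof is the single observation that $a^*a=0$ is exactly the assertion $a^*\in(\perp a)$. The containment $(\perp a)\subseteq(\perp p)$ then gives $a^*\in(\perp p)$, i.e. $a^*p=0$; applying $*$ and using $p=p^*$ turns this into $pa=0$, so that now $p\in(\perp a)$. Running $p$ back through the same containment yields $p\in(\perp p)$, that is $pp=p^2=0$. Since $p$ is idempotent this forces $p=p^2=0$, contradicting $p\neq0$. Hence there is no nonzero $a$ with $a^*a=0$, which is precisely the statement that $A$ is proper.

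I expect the only real obstacle to be bookkeeping rather than ideas: one must unwind the definition of $\subseteq_\perp$-coinitiality carefully enough to be certain of the direction of the annihilator containment and of which annihilator $(\perp a)=\{c\in A:ca=0\}$ is involved, since the whole argument hinges on $a^*$ and then $p$ belonging to one and the same annihilator set. Everything else — the adjoint manipulation and the collapse of $p^2=0$ into $p=0$ via idempotency — is immediate once the self-adjoint idempotent nature of $p$ is in hand. Notably no appeal to a unit is made, so the argument goes through verbatim in the possibly non-unital setting of this section.
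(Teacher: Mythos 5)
Your proof is correct and is essentially the paper's own argument: obtain a non-zero projection $p$ below $a$ in the $\subseteq_\perp$ sense, feed $a^*$ and then $p$ itself through the annihilator containment, and collapse $p=p^2=0$. The only cosmetic difference is that you work with left annihilators starting from $a^*a=0$ while the paper works with the mirror-image $aa^*=0$; these are interchangeable via $a\mapsto a^*$, so nothing is lost.
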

\begin{proof}
If $a\neq0$ and $aa^*=0$ then we have $p\in\mathcal{P}\setminus\{0\}$ with $p\subseteq_\perp a$ so $pa^*=0=ap$ and hence $p=pp=0$, a contradiction.
\end{proof}

A unital *-ring $A$ is Blackadar iff $A$ is $\subseteq_\perp$-Blackadar, as
\[(\perp a)\subseteq(\ll p\neq1)\quad\Leftrightarrow\quad(\perp a)\subseteq(\perp p^\perp\neq0).\]
In fact, most Blackadar *-rings are automatically unital, as the following generalization of \cite{Berberian1972} \S3 Proposition 2 shows.
\begin{prp}
Every Blackadar *-ring $A$ is proper and
\begin{equation}\label{B*Rprop1eq}
0\in(0\neq)(0\neq)\quad\Rightarrow\quad\mathcal{P}\neq\{0\}\quad\Leftrightarrow\quad0\neq1\in A.
\end{equation}
\end{prp}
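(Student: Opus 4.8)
The plan is to prove the three assertions in an order that avoids circularity: first the equivalence $\mathcal{P}\neq\{0\}\Leftrightarrow 0\neq1\in A$, then the implication $0\in(0\neq)(0\neq)\Rightarrow\mathcal{P}\neq\{0\}$, and finally properness as a consequence of both together with \autoref{wBproper}. The crux is the forward direction of the equivalence, namely that the existence of a single nonzero projection already forces $A$ to be unital, so I would treat that first.

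For that direction, suppose $0\neq e\in\mathcal{P}$. Since $e=ee^*$ yields $e^*=(ee^*)^*=ee^*=e$ and hence $e^2=e$, for every $c\in A$ I would observe that $(c-ce)e=ce-ce^2=0$, so $c-ce\in(\perp e)$. Applying the Blackadar condition to $e\in A\setminus\{0\}$ produces $p\in\mathcal{P}$ with $(\perp e)\subseteq(\ll p)$, whence $c-ce=(c-ce)p$ for all $c$. Rearranging gives $c=ce+cp-cep=c(e\bullet p)$, so $w:=e\bullet p$ is a right unit. An adjoint trick then upgrades this to a two-sided unit: if $w$ is a right unit then $c^*w=c^*$ for all $c$, so $w^*c=c$ and $w^*$ is a left unit; comparing $w^*=w^*w=w$ shows $w=w^*$ is a two-sided unit, and $ew=e\neq0$ forces $w\neq0$. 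The reverse implication is immediate, since $0\neq1\in A$ gives $1\in\mathcal{P}\setminus\{0\}$.

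The zero-divisor implication is short: if $x,y\neq0$ with $xy=0$ then $x\in(\perp y)$, so Blackadar applied to $y$ gives $p\in\mathcal{P}$ with $x=xp$, and $x\neq0$ forces $p\neq0$. For properness I would argue by contradiction. Suppose $a\neq0$ with $a^*a=0$; then $a^*,a$ are both nonzero with $a^*a=0$, so $0\in(0\neq)(0\neq)$, and the implication just proved together with the equivalence make $A$ unital. But a unital Blackadar *-ring is $\subseteq_\perp$-Blackadar, hence proper by \autoref{wBproper} (concretely, in the unital setting Blackadar applied to $a$ gives $p\neq1$ with $(\perp a)\subseteq(\ll p)=(\perp p^\perp)$, and since $a^*\in(\perp a)$ we get $a=pa$, so $p^\perp a=0$ and thus $p^\perp\in(\perp a)\subseteq(\perp p^\perp)$, forcing the nonzero projection $p^\perp$ to satisfy $(p^\perp)^2=0$). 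This contradicts $a^*a=0$, $a\neq0$.

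I expect the unit construction to be the main obstacle, and the point needing the most care is guaranteeing there is no circularity: the construction of $w$ must not presuppose properness or unitality, and indeed it uses only $e=e^*=e^2$ together with one application of the Blackadar condition. A secondary subtlety is that properness does not appear to follow from a single use of Blackadar applied to $a$ itself, since the relation $a=pa$ one extracts is perfectly consistent with $a\neq0$; this is precisely why I route properness through unitality, where the annihilator-absorbing projection becomes genuinely two-sided and the contradiction via $(p^\perp)^2=0$ becomes available.
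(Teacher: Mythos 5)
Your proof is correct and follows essentially the same route as the paper: a nonzero projection $e$ together with the Blackadar condition applied to $e$ yields the right unit $e+p-ep$, the zero-divisor implication is handled identically, and properness is routed through unitality, the equivalence with $\subseteq_\perp$\!-Blackadar, and \autoref{wBproper}. The only cosmetic difference is that you upgrade the right unit $w$ to a two-sided unit via the adjoint trick $w^*=w^*w=w$, whereas the paper instead computes $qp=qpq=(qpq)^*=pq$ to show $w$ is self-adjoint and hence a left unit.
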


\begin{proof}  First we show \eqref{B*Rprop1eq} holds, even under the weaker assumption
\begin{equation}\label{fakeB}
\forall a\in A\setminus\{0\}\ \exists p\in\mathcal{P}\ (\perp a)\subseteq(\ll p).
\end{equation}
For $0\in(0\neq)(0\neq)$ means we have $a,b\neq0=ab$.  Thus we have $p\in\mathcal{P}$ with $(\perp b)\subseteq(\ll p)$ so $a\ll p$.  If $p=0$ then $a=ap=0$, a contradiction, which proves
\[0\in(0\neq)(0\neq)\quad\Rightarrow\quad\mathcal{P}\neq\{0\}.\]

Now if $0\neq p\in\mathcal{P}$ then we have $q\in\mathcal{P}$ with $(\perp p)\subseteq(\ll q)$.  Then, for all $a\in A$, $a=ap+ap^\perp$ (we interpret $ap^\perp$ here as shorthand for $a-ap$) and $ap^\perp\perp p$ so $ap^\perp\ll q$ and hence $a=ap+ap^\perp q=a(p+q-pq)$, i.e. $p+q-pq$ is a right unit for $A$.  In particular, $q=qp+q-qpq$ so $qp=qpq=(qpq)^*=pq$, so $p+q-pq$ is self-adjoint and hence a left unit for $A$ as well, which proves
\[\mathcal{P}\neq\{0\}\quad\Rightarrow\quad0\neq1\in A.\]
The converse is immediate, and in fact this argument shows that
\[\eqref{fakeB}\quad\Leftrightarrow\quad0\notin(0\neq)(0\neq)\text{ or }1\in A.\]
If $0\notin(0\neq)(0\neq)$ then $A$ is certainly proper.  Otherwise $A$ is unital so $A$ is $\subseteq_\perp$-Blackadar and hence proper, by \autoref{wBproper}.
\end{proof}

If $A$ is unital then $a\ll b\ \Leftrightarrow\ a\perp b^\perp$ immediately yields $\mathbin{\subseteq_\ll}\ =\ \mathbin{\subseteq_\perp}$.  In the non-unital case we still have the following.
\begin{align}
\label{subllperp}&&(\subseteq_\ll a)&\subseteq\quad(\subseteq_\perp a)&&\text{if }(a\ll)\neq\emptyset.\\
\label{perpll}&&\subseteq_\perp\quad\ &\subseteq\ \ \quad\subseteq_\ll&&\text{if $A$ is proper}.
\end{align}
\begin{proof}\
\begin{itemize}

\item[\eqref{subllperp}]  If $b\subseteq_\ll a\ll c$ then $b\ll c$.  If $a\perp d$ too then $a\ll cd^\perp$ so $b\ll cd^\perp$.  Thus $b=bcd^\perp=bd^\perp$ so $b\perp d$ and hence $(a\perp)\subseteq(b\perp)$, i.e. $b\subseteq_\perp a$.

\item[\eqref{perpll}]  If $a\subseteq_\perp b\ll c$ then $bc^\perp c^{\perp*}a=0$ so $ac^\perp c^{\perp*}a=0$ and hence $ac^\perp=0$, by properness.  Thus $a\ll c$ and hence $(b\ll)\subseteq(a\ll)$, i.e. $a\subseteq_\ll b$.\qedhere
\end{itemize}
\end{proof}

\[(0\neq\circ\preceq^A)=(0\neq)\text{ and}\preceq^A\!\!\text{-Blackadar}\quad\Rightarrow\quad\subseteq_\perp\!\!\text{-Blackadar}\quad\Rightarrow\quad\ll\!\text{-Blackadar},\]
with equivalence holding if
\begin{equation}\label{0neq}
(0\neq\circ\ll\circ\subseteq_\perp)=(0\neq).
\end{equation}

\begin{proof}
If $(0\neq\circ\preceq^A)=(0\neq)$ then, for any $a\neq0$, we have $b\preceq^Aa$, for some $b\in A\setminus\{0\}$.  If $A$ is $\preceq^A$-Blackadar then $p\preceq^Aa$, for some $p\in\mathcal{P}\setminus\{0\}$ so $p\subseteq_\perp a$, by \eqref{preceqsubsub} and \eqref{preceqperp}, so $A$ is $\subseteq_\perp$\!-Blackadar.

If $A$ is $\subseteq_\perp$-Blackadar and $0\neq b\ll a$ then we have $p\in\mathcal{P}\setminus\{0\}$ with $p\subseteq_\perp b\ll a$ so $p\ll a$, by \autoref{wBproper} and \eqref{perpll}, so $A$ is $\ll$-Blackadar.

If $A$ is $\ll$-Blackadar and \eqref{0neq} holds then, for all $a\neq0$, we have $b,c\in A$ with $c\ll b\preceq^Aa$ so $p\ll b$, for some $p\in\mathcal{P}\setminus\{0\}$.  Thus $p\preceq^Ab\preceq^Aa$ so $p\preceq^Aa$ and hence $A$ is $\preceq^A$-Blackadar.
\end{proof}

In a topological semigroup, define a topological version of the Green relation by
\[a\preceq^\mathsf{L}b\quad\Leftrightarrow\quad a\in\overline{Ab}.\]

\begin{cor}\label{SPBlack}
For C*-algebra $A$,
\[\preceq^A\hspace{-4pt}\text{-Blackadar}\quad\Leftrightarrow\quad\preceq^\mathsf{L}\hspace{-4pt}\text{-Blackadar}\quad\Leftrightarrow\quad\subseteq_\perp\hspace{-4pt}\text{-Blackadar}\quad\Leftrightarrow\quad\ll\hspace{-4pt}\text{-Blackadar}.\]
\end{cor}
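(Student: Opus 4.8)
The plan is to prove the four properties equivalent by running the cycle
\[\preceq^A\!\text{-Blackadar}\Rightarrow\preceq^\mathsf{L}\!\text{-Blackadar}\Rightarrow\subseteq_\perp\!\text{-Blackadar}\Rightarrow\ll\!\text{-Blackadar}\Rightarrow\preceq^A\!\text{-Blackadar}.\]
The two rightmost links are already available from the implications established just before the corollary: $\subseteq_\perp$-Blackadar $\Rightarrow\ll$-Blackadar holds unconditionally (a C*-algebra is proper, so \autoref{wBproper} and \eqref{perpll} apply), while $\ll$-Blackadar $\Rightarrow\preceq^A$-Blackadar holds once the side condition \eqref{0neq} is verified for $A$. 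So I only need to supply the two links passing through $\preceq^\mathsf{L}$ and to check \eqref{0neq}. Throughout I use that for a C*-algebra $a\in\overline{Aa}$ (via an approximate unit) and $a^*a\in Aa\setminus\{0\}$ for every nonzero $a$, so the relevant coinitiality conditions all quantify over every nonzero $a$.

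For the first new link, $Aa\subseteq\overline{Aa}$ gives $\mathbin{\preceq^A}\subseteq\mathbin{\preceq^\mathsf{L}}$, so a nonzero projection produced in $Aa$ already lies in $\overline{Aa}$; hence $\preceq^A$-Blackadar yields, for each nonzero $a$, a nonzero projection $p\in\overline{Aa}$, which is $\preceq^\mathsf{L}$-Blackadar. For the second, $\preceq^\mathsf{L}$-Blackadar $\Rightarrow\subseteq_\perp$-Blackadar, I apply $\preceq^\mathsf{L}$-Blackadar to $a^*$ (nonzero): this gives a nonzero projection $q\in\overline{Aa^*}$, and taking adjoints (using $q=q^*$) places $q\in\overline{aA}$. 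Now if $x\perp a$, i.e. $xa=0$, then $x(aA)=\{0\}$ and continuity of multiplication gives $x\,\overline{aA}=\{0\}$, so $xq=0$; thus $(\perp a)\subseteq(\perp q)$, which is exactly $q\subseteq_\perp a$. As $a\neq0$ was arbitrary and $q\neq0$, this is $\subseteq_\perp$-Blackadar.

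It remains to verify \eqref{0neq}, and this is the one genuinely delicate step. Fix a nonzero $a$, put $h=a^*a\geq0$, and note $\lambda:=\|h\|=\|a\|^2>0$ with $\lambda\in\sigma(h)$. Choose a continuous $k\colon[0,\lambda]\to[0,1]$ with $k\equiv0$ on a neighbourhood of $0$ and $k\equiv1$ on $[\tfrac{2}{3}\lambda,\lambda]$, and set $b=k(h)$. Writing $k(t)=tf(t)$ with $f=k/\mathrm{id}$ continuous and vanishing at $0$, the intertwining identity $a^*\varphi(aa^*)=\varphi(a^*a)\,a^*$ (valid for continuous $\varphi$ vanishing at $0$) gives $b=f(a^*a)\,a^*a=a^*f(aa^*)a\in Aa$, so $b\preceq^A a$. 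Pick a continuous $g\geq0$ supported in $[\tfrac{2}{3}\lambda,\lambda]$ with $g(\lambda)>0$ and set $c=g(h)$; then $c\neq0$ since $\lambda\in\sigma(h)$, and $cb=(gk)(h)=g(h)=c$ because $k\equiv1$ on the support of $g$, i.e. $c\ll b$. Thus $c\ll b\preceq^A a$ with $c\neq0$, establishing \eqref{0neq} and closing the cycle. The point to watch is precisely this last construction: continuous functional calculus naturally lands only in the \emph{closed} left ideal $\overline{Aa}$, whereas $\preceq^A$ demands membership in the \emph{algebraic} left ideal $Aa$; the symmetric expression $a^*f(aa^*)a$ is what keeps $b$ inside $Aa$, and every other step is formal.
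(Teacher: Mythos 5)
Your proof is correct and takes essentially the same route as the paper: the first three implications come from the relation inclusions $\mathbin{\preceq^A}\subseteq\mathbin{\preceq^\mathsf{L}}\subseteq\mathbin{\subseteq_\perp}$ combined with the general implications established just before the corollary, and the cycle is closed by verifying \eqref{0neq} via the continuous functional calculus, producing $0\neq c\ll b\preceq^A a$ with $b=k(a^*a)$ for $k$ vanishing near $0$ and equal to $1$ near $\|a\|^2$ --- exactly the paper's ``$f\ll g$'' construction, with the factorization $k(t)=tf(t)$ supplying membership in the algebraic ideal $Aa$. The only cosmetic difference is that you route $\preceq^\mathsf{L}\Rightarrow\subseteq_\perp$ through $a^*$ and adjoints instead of citing the inclusion $\mathbin{\preceq^\mathsf{L}}\subseteq\mathbin{\subseteq_\perp}$ directly, which if anything is more careful about the left/right annihilator conventions.
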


\begin{proof}
As multiplication is continuous, $\preceq^A\ \subseteq\ \preceq^\mathsf{L}\ \subseteq\ \subseteq_\perp$.  Thus it suffices to show that $A$ satisfies \eqref{0neq}, which follows from the continuous functional calculus.  Specifically, for any $a\in A\setminus\{0\}$, take continuous functions $f$ and $g$ on $\mathbb{R}$ such that $f\ll g$ and $f(||a||^2)\neq0=g(x)$, for all $x$ in a neighbourhood of $0$, so
\[0\neq f(a^*a)\ll g(a^*a)\preceq^Aa.\qedhere\]
\end{proof}

In C*-algebras, closed left ideals $I$ correspond precisely to hereditary C*-subalgebras $I\cap I^*$.  So $A$ is $\preceq^\mathsf{L}$-Blackadar iff every hereditary C*-subalgebra contains a non-zero projection, which is property (SP) from \cite{Blackadar1994}.  Thus, for C*-algebra $A$,
\begin{equation}\label{wBSP}
A\text{ is $\subseteq_\perp$-Blackadar}\quad\Leftrightarrow\quad A\text{ has property (SP)}.
\end{equation}
Incidentally, for C*-algebra $A$ we also have $\mathbin{\preceq^\mathsf{L}}=\mathbin{\subseteq_{\perp^*}}$, where $\perp^*$ is defined on $A\times A^*$ (here $A^*$ is the dual of $A$) by $a\perp^*\phi\ \Leftrightarrow\ \phi[Aa]=\{0\}$ (see \cite{Effros1963}).

\section{Lattice Structure}\label{LS}

Throughout this section we assume
\[A\text{ is a $\subseteq_\perp$\!-Blackadar *-ring}.\]
Unlike weakly Rickart *-rings, the projections in a $\subseteq_\perp$\!-Blackadar *-ring may not form a lattice.  However, we can still examine supremums and infimums in $\mathcal{P}$ when they do exist, generalizing the weakly Rickart *-ring theory.

First we need the following elementary facts.
\begin{align}
\label{llallb}(\ll a)\cap(\ll b)&\subseteq(\perp ab^\perp).\\
\label{allbll}(a\ll)\cap(b\ll)&\subseteq(ab^\perp\ll).
\end{align}

\begin{proof}\
\begin{itemize}
\item[\eqref{llallb}]  If $c\ll a,b$ then $cab^\perp=cb^\perp=0$.
\item[\eqref{allbll}]  If $a,b\ll c$ then $ab^\perp c=ac-abc=a-ab=ab^\perp$.\qedhere
\end{itemize}
\end{proof}

The following results say $\mathcal{P}$ a complete sublattice of $A$, in an appropriate sense.

\begin{prp}\label{compsublat}
Minimal upper bounds in $\mathcal{P}$ are $\ll$-supremums in $A$.
\end{prp}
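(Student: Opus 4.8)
The plan is to show that a minimal upper bound $p \in \mathcal{P}$ of a set $B \subseteq A$ satisfies $(p\ll) = \bigcap_{b\in B}(b\ll)$, which by \eqref{sup} is exactly the statement $p = \bigvee B$. Since $p$ is an upper bound we have $b \ll p$ for all $b \in B$, so transitivity of $\ll$ (\eqref{preceqll}) gives $(p\ll) \subseteq \bigcap_{b\in B}(b\ll)$ for free. The whole content lies in the reverse inclusion: I must show that any common upper bound $c$ of $B$ (i.e.\ $b \ll c$ for all $b \in B$) also satisfies $p \ll c$.

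I would prove this by contradiction, using minimality to manufacture a strictly smaller upper bound whenever $p \not\ll c$. Writing $d = p - pc$ and recalling that projections are self-adjoint idempotents (\autoref{PJ}), the assumption $p \not\ll c$ says $d \neq 0$, hence $d^* \neq 0$. The two computations I would single out are $d^*p = (p - c^*p)p = d^*$ and, for each $b \in B$, $bd = bp - (bp)c = b - bc = 0$, so that $d^*b^* = (bd)^* = 0$. Feeding the nonzero element $d^*$ into the $\subseteq_\perp$-Blackadar hypothesis yields $e \in \mathcal{P}\setminus\{0\}$ with $e \subseteq_\perp d^*$; unwinding via \eqref{R_}, this means $d^*x = 0 \Rightarrow ex = 0$ for all $x$.

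From here the plan is to read off two facts about $e$. Because $d^*(x - px) = d^*x - (d^*p)x = 0$ for every $x$, each $x - px$ lies in the right annihilator of $d^*$, so $e(x - px) = 0$; thus $(e - ep)A = 0$ and properness (\autoref{wBproper}, \eqref{properness}) forces $e = ep$, whence $e = pe$ as well. Similarly $d^*b^* = 0$ gives $eb^* = 0$, so $be = (eb^*)^* = 0$ for all $b \in B$. Setting $q = p - e$, these identities make $q$ a self-adjoint idempotent with $qp = q$ and $bq = bp - be = b$, so $q$ is an upper bound of $B$ lying $\ll$-below $p$; as $e \neq 0$ we have $q \neq p$, contradicting minimality. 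Therefore $p \ll c$, and the reverse inclusion holds.

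The delicate point, and the step I would be most careful about, is the one-sided annihilator bookkeeping: it is essential to feed $d^*$ rather than $d$ into the Blackadar property, since it is precisely the pair of relations $d^*p = d^*$ and $d^*b^* = 0$ that simultaneously yields $e = ep = pe$ (so that $p - e$ is a genuine subprojection) and $be = 0$ for every $b \in B$ (so that $p - e$ still dominates $B$). The non-unital setting adds a wrinkle here, since ``$e = ep$'' cannot be obtained by multiplying by a unit and must instead be extracted from $(e - ep)A = 0$ using properness.
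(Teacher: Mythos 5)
Your argument is correct and is essentially the paper's own proof: your $d^*=p-c^*p$ is exactly the paper's $a^{*\perp}p$, the nonzero projection $e$ extracted via the $\subseteq_\perp$-Blackadar property is the paper's $r$, and $q=p-e$ is the same strictly smaller upper bound contradicting minimality. The only cosmetic difference is that you re-derive $e=ep=pe$ and $be=0$ directly from the annihilator inclusions and properness, where the paper routes these two steps through \eqref{perpll} and \eqref{llallb}.
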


\begin{proof}
Say $Q\subseteq\mathcal{P}$ and $Q\ll p\in\mathcal{P}$.  If $p\neq\bigvee Q$ in $A$ then $Q\ll a$ but $p\not\ll a$, for some $a\in A$.  So $0\neq r\subseteq_\perp a^{*\perp}p\ll p$, for some $r\in\mathcal{P}$.  Thus $r\ll p$, by \eqref{perpll}, but
\[Q\subseteq(\ll p)\cap(\ll a)\subseteq(\perp pa^\perp)\subseteq(\perp r).\]
So $Q\ll p-r\ll p$ even though $p\neq p-r\in\mathcal{P}$, i.e. $p$ is not minimal.
\end{proof}

\begin{prp}\label{compsublat2}
Maximal lower bounds in $\mathcal{P}$ are $\ll$-infimums in $A$.
\end{prp}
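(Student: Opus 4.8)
The plan is to dualize the proof of \autoref{compsublat}, replacing the \emph{trimming} $p-r$ of an upper bound by an \emph{enlargement} $p+r$ of a lower bound, and replacing \eqref{llallb} by \eqref{allbll}. So suppose $Q\subseteq\mathcal{P}$ and $p\in\mathcal{P}$ is a lower bound, i.e. $p\ll q$ for all $q\in Q$. Since $\ll$ is transitive, $x\ll p$ forces $x\ll q$ for every $q$, so $(\ll p)\subseteq\bigcap_{q\in Q}(\ll q)$. Thus if $p$ fails to be the $\ll$-infimum of $Q$ in $A$ then this inclusion is strict and I obtain some $a\in A$ with $a\ll q$ for all $q\in Q$ but $a\not\ll p$. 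The latter says exactly that $ap^\perp=a-ap\neq0$, and this element will play the role that $a^{*\perp}p$ plays in \autoref{compsublat}: it captures the part of the competitor $a$ that is not yet dominated by $p$.

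Next I would apply $\subseteq_\perp$-Blackadarness to the nonzero element $ap^\perp$ to extract a nonzero $r\in\mathcal{P}$ with $r\subseteq_\perp ap^\perp$, and read off two facts. First, from $a\ll q$ and $p\ll q$, \eqref{allbll} gives $ap^\perp\ll q$ for every $q\in Q$; since $\subseteq_\perp$-Blackadar rings are proper by \autoref{wBproper}, \eqref{perpll} lets me carry this along $r\subseteq_\perp ap^\perp$ to conclude $r\ll q$, i.e. $r\ll Q$. Second, $ap^\perp$ right-annihilates $p$ (indeed $(a-ap)p=0$), and as $\subseteq_\perp$ compares right annihilators this forces $rp=0$, whence $pr=0$ by self-adjointness of $p$ and $r$, i.e. $p\perp r$.

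Finally I would assemble the contradiction. By \eqref{p+q} and $p\perp r$ the sum $p+r$ is again a projection, and $r\ll Q$ together with $p\ll Q$ gives $(p+r)q=pq+rq=p+r$, so $p+r$ is a lower bound of $Q$ in $\mathcal{P}$. Since $p(p+r)=p$ and $r\neq0$ we have $p\ll p+r\neq p$, so $p+r$ is a lower bound lying strictly $\ll$-above $p$, contradicting the maximality of $p$. Hence $(\ll p)=\bigcap_{q\in Q}(\ll q)$, i.e. $p=\bigwedge Q$ in $A$, as required.

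The one place where the argument is genuinely not a verbatim mirror of \autoref{compsublat} is that $a$ need not be idempotent, so I cannot conclude $r\ll a$ directly; instead the needed idempotency is borrowed from each $q\in Q$ through the identities $a=aq$ and $p=pq$ inside \eqref{allbll} to obtain $ap^\perp\ll q$. I also expect the bookkeeping of annihilator sides to be the only real hazard: $\subseteq_\perp$ must be used as a comparison of \emph{right} annihilators, which is precisely what makes both $rp=0$ and $r\ll q$ drop out of $r\subseteq_\perp ap^\perp$, and the self-adjointness of $p$ and $r$ is what upgrades $rp=0$ to $p\perp r$ without appealing to symmetry of $\perp$.
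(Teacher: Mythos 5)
Your proof is correct and follows essentially the same route as the paper's: extract a nonzero projection $r\subseteq_\perp ap^\perp$, use \eqref{allbll} and \eqref{perpll} to get $r\ll Q$, note $ap^\perp\perp p$ gives $p\perp r$, and contradict maximality via the strictly larger lower bound $p+r\in\mathcal{P}$. The extra detail you supply (e.g.\ why $rp=0$ upgrades to $p\perp r$, and why $p+r$ is a projection) is just an expansion of steps the paper leaves implicit.
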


\begin{proof}
Say $p\ll Q$ but $p\neq\bigwedge Q$ in $A$ so $a\ll Q$ but $a\not\ll p$, for some $a\in A$.  So $0\neq r\subseteq_\perp ap^\perp\perp p$, for some $r\in\mathcal{P}$, and hence $r\perp p$.  But
\[Q\subseteq(p\ll)\cap(a\ll)\subseteq(ap^\perp\ll)\subseteq(r\ll)\]
so $p\ll p+r\ll Q$, even though $p\neq p+r\in\mathcal{P}$, i.e. $p$ is not maximal.
\end{proof}

For $p,q,r\in\mathcal{P}$ we define
\[r=p^\perp\wedge q\quad\Leftrightarrow\quad p\perp r\ll q\quad\text{and}\quad\{s\in\mathcal{P}:p\perp s\ll q\}\subseteq(\ll r).\]
If $A$ is unital, this coincides the definition of $p^\perp\wedge q$ in \eqref{inf}.  In general, we can still characterize $p^\perp\wedge q$ as follows (note $X=Y$ for partially defined expressions $X$ and $Y$ means $X$ is defined iff $Y$ is defined, in which case they coincide).
\begin{equation}\label{pperpwedgeq}
p^\perp\wedge q=\bigvee_{p\perp s\ll q}s.
\end{equation}

\begin{proof}
If $r=p^\perp\wedge q$ then $\bigcap_{p\perp s\ll q}(s\ll)\subseteq(r\ll)$, as $p\perp r\ll q$, and $(r\ll)\subseteq\bigcap_{p\perp s\ll q}(s\ll)$, as $s\ll r$ whenever $p\perp s\ll q$, so $r=\bigvee_{p\perp s\ll q}s$.

Conversely, if $r=\bigvee_{p\perp s\ll q}s$ then $\{s\in\mathcal{P}:p\perp s\ll q\}\subseteq(\ll p^\perp r)$ so $r\ll p^\perp r$, by \autoref{compsublat}.  Thus $rpr=0$ and hence $p\perp r$, by \autoref{wBproper}, so $r=p^\perp\wedge q$.
\end{proof}

Incidentally, for C*-algebra $A$, \eqref{pperpwedgeq} applies even if $A$ is not $\subseteq_\perp$\!-Blackadar.  Indeed, if $r=\bigvee_{p\perp s\ll q}s$ commutes with $p$ then $p^\perp r$ is a projection so the last part still applies even without recourse to \autoref{compsublat}.  While if $r$ does not commute with $p$ then $\sigma(pr)\neq\{0,1\}$ so we can apply the continuous functional calculus as in \cite{Bice2012} to obtain a projection $t\in C^*(r,p)$ with $r\not\ll t$ and $(\ll p^\perp t)=(\ll p^\perp r)$ so $t\in\bigcap_{p\perp s\ll q}(s\ll)\setminus(r\ll)$, contradicting $r=\bigvee_{p\perp s\ll q}s$.

For $R\subseteq A\times A$ and $a\in A$, if we define $p=[a\rrbracket_R\ \Leftrightarrow\ a=_Rp\in\mathcal{P}$ then
\[[a\rrbracket_\ll=[a\rrbracket_\perp.\]
\begin{proof}
If $p=[a\rrbracket_\ll$ then $a=_\ll p\ll p$ and hence $a=_\perp p$, by \eqref{subllperp}, i.e. $p=[a\rrbracket_\perp$.  While if $a=_\perp p$ then $a=_\ll p$, by \autoref{wBproper} and \eqref{perpll}.
\end{proof}
Let $[a\rrbracket=[a\rrbracket_\ll=[a\rrbracket_\perp$, which is the \emph{right support projection} of $a$ (see \cite{Berberian1972} \S3 Definition 4).  Also let $(p\vee q^\perp)\wedge q=(q^\perp\wedge p)^\perp\wedge q$, which is the \emph{Sasaki projection} of $p$ onto $q$ (see \cite{Kalmbach1983} \S7).  By \eqref{Sasaki}, this is coincides with the right support projection of $pq$, while \eqref{pwedgeq} and \eqref{pveeq} generalize \cite{Berberian1972} \S5 Proposition 7.
\begin{align}
\label{Sasaki}(p\vee q^\perp)\wedge q\ &=\ [pq\rrbracket.\\
\label{pwedgeq}p\wedge q\ &=\ [p^\perp q\rrbracket^\perp q.\\
\label{pveeq}p\vee q\ &=\ [pq^\perp\rrbracket+q.
\end{align}

\begin{proof}\
\begin{itemize}
\item[\eqref{Sasaki}]  If $[pq\rrbracket$ is defined then $[pq\rrbracket\ll q$, as $pq\ll q$, so
\[s=[pq\rrbracket^\perp q=q[pq\rrbracket^\perp=q-[pq\rrbracket\in\mathcal{P}.\]
As $pq\ll[pq\rrbracket$, $ps=pq[pq\rrbracket^\perp=0$ so $p\perp s\ll q$.  While if $p\perp r\ll q$, for some $r\in\mathcal{P}$, then $pqr=pr=0$ so $[pq\rrbracket r=0$ and hence $rs=r[pq\rrbracket^\perp=r$, i.e. $r\ll s$.  Thus $s=p^\perp\wedge q$ so $(p\vee q^\perp)\wedge q=q-s=[pq\rrbracket$.

On the other hand, if $p^\perp\wedge q$ is defined then so is
\[s=(p\vee q^\perp)\wedge q=q-(p^\perp\wedge q)=q(p^\perp\wedge q)^\perp=(p^\perp\wedge q)^\perp q.\]
Note $pqs=pq-p(p^\perp\wedge q)=pq$, i.e. $pq\ll s$.  If $s\nsubseteq_\perp pq$ then $pq\perp a$ and $sa\neq0$, for some $a\in A$.  Thus $0\neq r\subseteq_\perp a^*s$, for some $r\in\mathcal{P}$.  Then $r\ll s\ll q$ so $pr=pqr=0$, as $pqsa=pqa=0$, i.e. $p\perp r$.  Thus $r\ll(p^\perp\wedge q)$ so $r=rs=r(q-(p^\perp\wedge q))=r-r=0$, a contradiction.  So $s=[pq\rrbracket$, i.e. $(p\vee q^\perp)\wedge q=[pq\rrbracket$.

\item[\eqref{pwedgeq}]  Note $p^\perp\wedge q=q-((p\vee q^\perp)\wedge q)=$ and $[pq\rrbracket^\perp q=q-[pq\rrbracket$, so all we really need to do is exchange $p$ and $p^\perp$ in the proof of \eqref{Sasaki} above.

\item[\eqref{pveeq}]  If $[pq^\perp\rrbracket$ is defined then $[pq^\perp\rrbracket\perp q$, as $pq^\perp\perp q$, so $q\ll s=[pq^\perp\rrbracket+q\in\mathcal{P}$.  Now $pq[pq^\perp\rrbracket=p0=0$ so $pqs=pqq=pq$ and $pq^\perp q=p0=0$ so $pq^\perp s=pq^\perp[pq^\perp\rrbracket=pq^\perp$.  Thus $ps=pqs+pq^\perp s=pq+pq^\perp=p$, i.e. $p\ll s$.  While if $p,q\ll r$ then $pq^\perp r=prq^\perp=pq^\perp$ so $[pq^\perp\rrbracket\ll r$ and thus $s=[pq^\perp\rrbracket+q\ll r$.  Thus $s=p\vee q$.

If $p\vee q$ is defined let $s=(p\vee q)-q=(p\vee q)q^\perp=q^\perp(p\vee q)\in\mathcal{P}$.  Then $pq^\perp s=pq^\perp(p\vee q)=p(p\vee q)q^\perp=pq^\perp$, i.e. $pq^\perp\ll s$.  If $s\nsubseteq_\perp pq^\perp$ then $pq^\perp a=0\neq sa$, for some $a\in A$.  Thus $0\neq r\subseteq_\perp a^*s$, for some $r\in\mathcal{P}$.  Then $r\ll s\perp q$ so $pr=pq^\perp r=0$, as $pq^\perp sa=pq^\perp a=0$, i.e. $p\perp r$.  Thus $p,q\ll r^\perp(p\vee q)$ and hence $p\vee q\ll r^\perp(p\vee q)$, by \autoref{compsublat}.  Hence $(p\vee q)r(p\vee q)=0$ so $p\vee q\perp r$, by \autoref{wBproper}.  But then $r=rs=r(p\vee q)q^\perp=0$, a contradiction.  Thus $s=[pq^\perp\rrbracket$ so $p\vee q=s+q=[pq^\perp\rrbracket+q$.\qedhere
\end{itemize}
\end{proof}

Let $\top\!_\ll$ and $\top\!_\perp$ denote the $\ll$-incompatibility and $\subseteq_\perp$\!-incompatibility relations.
\begin{align}
\label{SOM}&\text{For }p\in\mathcal{P}&a\not\subseteq^\ll p\ll a\ &\Rightarrow\ \exists q\in\mathcal{P}\backslash\{0\}(p\perp q\ll a).&&\\
\label{SOM*}&\text{For }p\in\mathcal{P}&p\not\subseteq_\ll a\ll p\ &\Rightarrow\ \exists q\in\mathcal{P}\backslash\{0\}(a\perp q\ll p).&&\\
\label{sep}&\text{For }p\in\mathcal{P}&p\not\ll a\ &\Rightarrow\ \exists q\in\mathcal{P}\backslash\{0\}(a\mathbin{\top\!_\ll}q\ll p),&&\\
\label{sep*}&\text{For }p\in\mathcal{P}&p\not\subseteq_\perp a\ &\Rightarrow\ \exists q\in\mathcal{P}\backslash\{0\}(a\mathbin{\top\!_\perp}q\ll p),&&
\end{align}

\begin{proof}\
\begin{itemize}
\item[\eqref{SOM}]  If $a\not\subseteq^\ll p\ll a$ then we have $b\ll a$ with $b\not\ll p$ and hence $bp^\perp\neq0$.  Thus we have a non-zero projection $q\subseteq_\perp bp^\perp$.  As $bp^\perp p=0$, we have $q\perp p$ and, as $bp^\perp a=ba-bpa=b-bp=bp^\perp$, \eqref{perpll} yields $q\ll a$.

\item[\eqref{SOM*}]  If $p\not\subseteq_\ll a\ll p$ then we have $b\gg a$ with $p\not\ll b$ and hence $pb^\perp\neq0$.  Thus we have a non-zero projection $q\subseteq_\perp b^{*\perp}p$ and hence $q\ll p$, by \eqref{perpll}.  As $a\ll p,b$, we have $b^{*\perp}pa^*=b^{*\perp}a^*=0$ and hence $qa^*=0=aq$.

\item[\eqref{sep}]  If $p\not\ll a$, we have a non-zero projection $q\subseteq_\perp a^{*\perp}p$.  By \eqref{perpll}, $q\ll p$ so if $r\ll a,q$ then $a^{*\perp}pr=a^{*\perp}r=0$ and hence $r=qr=0$, i.e. $a\mathbin{\top\!_\ll}q$.

\item[\eqref{sep*}]  If $p\not\subseteq_\perp a$ then $a\perp b$ and $p\not\perp b$, for some $b\in A$.  Thus we have a non-zero projection $q\subseteq_\perp b^*p$.  By \eqref{perpll}, $q\ll p$ so if $r\subseteq_\perp a,q$ then $r\perp b$ and $r\ll p$, by \eqref{perpll}, so $b^*pr=b^*r=0$ and hence $r=qr=0$, i.e. $a\mathbin{\top\!_\perp}q$.\qedhere
\end{itemize}
\end{proof}

There are C*-algebras where \eqref{SOM} and \eqref{SOM*} fail.  For example, considering $C([0,1],M_2)$, every projection $p\neq0,1$ has rank $1$ everywhere on $[0,1]$ and hence the required $q\in\mathcal{P}$ does not exist for $a\neq1$ with $a\not\subseteq^\ll p\ll a$ in \eqref{SOM}, or for $a\neq0$ with $p\not\subseteq_\ll a\ll p$ in \eqref{SOM*}.

If we restrict to $a\in\mathcal{P}$ then \eqref{SOM} and \eqref{SOM*} are just saying that $\mathcal{P}$ is orthomodular, which is immediate (take $q=a-p$ or $p-a$).  On the other hand, there are C*-algebras where $A$ is not orthomodular (w.r.t. $\subseteq_\perp$), e.g. $C([0,1],\mathbb{K})$, where $\mathbb{K}$ denotes the compact operators on a separable infinite dimensional Hilbert space \textendash\, see \cite{AkemannBice2014} Example 4.

Taking $a\in\mathcal{P}$ in \eqref{sep} or \eqref{sep*} generalizes \cite{Bice2009} Theorem 4.4 as follows.

\begin{cor}
Separativity holds on $\mathcal{P}$.
\end{cor}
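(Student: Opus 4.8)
The plan is to unwind the definition of separativity and split the equality $\mathbin{\ll}=\mathbin{\subseteq_{\top_\ll}}$ into its two inclusions, reducing one to the transitivity of $\ll$ and the other to the separation statement \eqref{sep}. Recall from \autoref{AR} that separativity of $\ll$ on $\mathcal{P}$ asserts $\mathbin{\ll}=\mathbin{\subseteq_{\top_\ll}}$, where $p\subseteq_{\top_\ll}q\Leftrightarrow(q\top_\ll)\subseteq(p\top_\ll)$ and $\top_\ll$ is the $\ll$-incompatibility relation. Since $\ll$ is a partial order on $\mathcal{P}$ by \autoref{PJ}, both directions make sense with $p,q$ ranging over $\mathcal{P}$.

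For the inclusion $\mathbin{\ll}\subseteq\mathbin{\subseteq_{\top_\ll}}$ I would apply \eqref{preceqsubsub} with $\top_\ll$ in the role of $\ll$, which reduces the claim to $\mathbin{\ll}\circ\mathbin{\top_\ll}\subseteq\mathbin{\top_\ll}$. This is immediate from transitivity: if $p\ll q\mathbin{\top_\ll}r$ and $c\ll p,r$, then $c\ll q$ as well, so $(c\ll)=A$ because $q\mathbin{\top_\ll}r$; hence $p\mathbin{\top_\ll}r$. No further hypotheses are needed for this half.

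The reverse inclusion $\mathbin{\subseteq_{\top_\ll}}\subseteq\mathbin{\ll}$ is the substantive direction, and it is precisely here that \eqref{sep} does the work. I would argue by contraposition: given $p,a\in\mathcal{P}$ with $p\not\ll a$, \eqref{sep} supplies a non-zero projection $q$ with $a\mathbin{\top_\ll}q\ll p$. This $q$ witnesses $p\not\subseteq_{\top_\ll}a$. Indeed, $a\mathbin{\top_\ll}q$ gives $q\in(a\top_\ll)$; on the other hand $q$ is a non-zero common $\ll$-lower bound of $p$ and $q$ (using $q\ll p$ and reflexivity of $\ll$ on $\mathcal{P}$), and $q\neq0$ forces $(q\ll)\neq A$, since $0\in A$ would otherwise give $q=q0=0$. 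Hence $p\not\mathbin{\top_\ll}q$, so $q\notin(p\top_\ll)$, whence $(a\top_\ll)\not\subseteq(p\top_\ll)$, i.e. $p\not\subseteq_{\top_\ll}a$. Combining both inclusions yields $\mathbin{\ll}=\mathbin{\subseteq_{\top_\ll}}$ on $\mathcal{P}$.

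The main obstacle is entirely absorbed into \eqref{sep}, whose proof already invoked the $\subseteq_\perp$-Blackadar hypothesis to manufacture the non-zero projection below $p$ incompatible with $a$; everything else is formal manipulation of the auxiliary-relation constructions of \autoref{AR}. Finally, running the identical argument with $\top_\perp$ and \eqref{sep*} in place of $\top_\ll$ and \eqref{sep} gives separativity of $\subseteq_\perp$ on $\mathcal{P}$, which agrees with the above since $\mathbin{\ll}=\mathbin{\subseteq_\perp}$ on $\mathcal{P}$ (by \eqref{perpll}, properness from \autoref{wBproper}, and the fact that $\ll$ is a partial order there).
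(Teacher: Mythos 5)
Your proposal is correct and matches the paper's intended argument: the paper derives the corollary by taking $a\in\mathcal{P}$ in \eqref{sep} (or \eqref{sep*}), and your write-up simply makes explicit the two formal steps the paper leaves to the reader, namely the easy inclusion $\mathbin{\ll}\subseteq\mathbin{\subseteq_{\top_\ll}}$ from transitivity via \eqref{preceqsubsub} and the contrapositive of the reverse inclusion via the witness $q$ from \eqref{sep}. The closing observation that \eqref{sep*} yields the $\subseteq_\perp$ version likewise agrees with the paper's ``or \eqref{sep*}''.
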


There are C*-algebras where separativity does not hold on $\mathcal{P}$.  For example, consider $C(X,M_2)$ where $X=\{-1/n:n\in\mathbb{N}\}\cup[0,1]$, and take everywhere rank $1$ projections $p$ and $q$ that coincide on $\{-1/n:n\in\mathbb{N}\}$ but differ on $(0,1]$.  Then $q\mathbin{\top\!_\ll}r\ll p$ implies $r=0$ on $\{-1/n:n\in\mathbb{N}\}$ and hence on $[0,1]$, by continuity.

\newpage

\bibliography{maths}{}
\bibliographystyle{alphaurl}

\end{document}